\providecommand{\U}[1]{\protect\rule{.1in}{.1in}}
\newtheorem{theorem}{Theorem}[section]
\theoremstyle{plain}
\newtheorem{lemma}{Lemma}[section]
\numberwithin{equation}{section}
\begin{document}
\title[Critical and subcritical Moser-Trudinger-Adams inequalities ]{Equivalence of critical and subcritical sharp Trudinger-Moser-Adams
inequalities }
\author{Nguyen Lam}
\address{Department of Mathematics\\
University of Pittsburgh\\
Pittsburgh, PA 15260, USA}
\email{nhlam@pitt.edu}
\author{Guozhen Lu}
\address{Department of Mathematics\\
Wayne State University\\
Detroit, MI 48202, USA}
\email{gzlu@wayne.edu}
\author{Lu Zhang}
\address{Department of Mathematics\\
Wayne State University\\
Detroit, MI 48202, USA}
\email{eu4347@wayne.edu}
\thanks{Research of this work was partly supported by a US NSF grant DMS\#1301595.}

\begin{abstract}
Sharp Trudinger-Moser inequalities on the first order Sobolev spaces and their
analogous Adams inequalities on high order Sobolev spaces play an important
role in geometric analysis, partial differential equations and other branches
of modern mathematics. Such geometric inequalities have been studied
extensively by many authors in recent years and there is a vast literature.
There are two types of such optimal inequalities: critical and subcritical
sharp inequalities, both are with best constants. Critical sharp inequalities
are under the restriction of the full Sobolev norms for the functions under
consideration, while the subcritical inequalities are under the restriction of
the partial Sobolev norms for the functions under consideration. There are
subtle differences between these two type of inequalities. Surprisingly, we
prove in this paper that these critical and subcritical Trudinger-Moser and
Adams inequalities are actually equivalent. Moreover, we also establish the
asymptotic behavior of the supremum for the subcritical Trudinger-Moser and
Adams inequalities on the entire Euclidean spaces (Theorem 1.1 and Theorem
1.3) and provide a precise relationship between the supremums for the critical
and subcritical Trudinger-Moser and Adams inequalities (Theorem 1.2 and
Theorem 1.4). Since the critical Trudinger-Moser and Adams inequalities can be
easier to prove than subcritical ones in some occasions, and more difficult to
establish in other occasions, our results and the method  suggest a new
approach to both the critical and subcritical Trudinger-Moser and Adams type inequalities.

\end{abstract}
\maketitle


\section{Introduction}

In this section, we will begin with giving an overview of the state of affairs
of the best constants for sharp Trudinger and Adams inequalities. Subsection
1.1 concerns the sharp Trudinger-Moser inequalities and Subsection 1.2
discusses the sharp Adams inequalities involving high order derivatives. In
Subection 1.3, we will state our main results on the equivalence between
critical and subcritical Trudinger-Moser and Adams inequalities.

\subsection{Trudinger-Moser inequality}

Motivated by the applications to the prescribed Gauss curvature problem on two
dimensional sphere $\mathbb{S}^{2}$, J. Moser proved in \cite{Mo} an
exponential type inequality on $\mathbb{S}^{2}$ with an optimal constant. In
the same paper, he sharpened an inequality on any bounded domain $\Omega$ in
the Euclidean space $\mathbb{R}^{N}$ studied independently by Pohozaev
\cite{Po}, Trudinger \cite{Tru} and Yudovich \cite{Yu}, namely the embedding
$W_{0}^{1,N}\left(  \Omega\right)  \subset L_{\varphi_{N}}\left(
\Omega\right)  $, where $L_{\varphi_{N}}\left(  \Omega\right)  $ is the Orlicz
space associated with the Young function $\varphi_{N}(t)=\exp\left(
\alpha\left\vert t\right\vert ^{N/(N-1)}\right)  -1$ for some $\alpha>0$. More
precisely, using the Schwarz rearrangement, Moser proved the following
inequality in \cite{Mo}:

\vskip 0.5cm

\textbf{Theorem A. }\textit{Let }$\Omega$\textit{ be a domain with finite
measure in Euclidean }$N-$\textit{space }$%
\mathbb{R}
^{N},~n\geq2$\textit{. Then there exists a constant }$\alpha_{N}>0$\textit{,
such that }%
\begin{equation}
\frac{1}{\left\vert \Omega\right\vert }\int_{\Omega}\exp\left(  \alpha
_{N}\left\vert u\right\vert ^{\frac{N}{N-1}}\right)  dx\leq c_{0} \label{1.0}%
\end{equation}
\textit{for any }$u\in W_{0}^{1,N}\left(  \Omega\right)  $\textit{ with }%
$\int_{\Omega}\left\vert \nabla u\right\vert ^{N}dx\leq1$\textit{.}
\textit{The constant } $\alpha_{N}=\omega_{N-1}^{\frac{1}{N-1}}$,
\textit{where} $\omega_{N-1}$ \textit{is the area of the surface of the unit
}$N-$ \textit{ball, is optimal in the sense that if we replace } $\alpha_{N}$
\textit{by any number }$\alpha>\alpha_{N}$, \textit{then the above inequality
can no longer hold with some} $c_{0}$ \textit{independent of} $u$.

\vskip0.5cm

Moser used the following symmetrization argument: every function $u$ is
associated to a radially symmetric function $u^{\ast}$ such that the
sublevel-sets of $u^{\ast}$ are balls with the same area as the corresponding
sublevel-sets of $u$. Moreover, $u$ is a positive and non-increasing function
defined on $B_{R}\left(  0\right)  $ where $\left\vert B_{R}\left(  0\right)
\right\vert =\left\vert \Omega\right\vert $. Hence, by the layer cake
representation, we can have that%
\[
\int_{\Omega}f\left(  u\right)  dx=\int_{B_{R}\left(  0\right)  }f\left(
u^{\ast}\right)  dx
\]
for any function $f$ that is the difference of two monotone functions. In
particular, we obtain%
\begin{align*}
\left\Vert u\right\Vert _{p}  &  =\left\Vert u^{\ast}\right\Vert _{p};\\
\int_{\Omega}\exp\left(  \alpha\left\vert u\right\vert ^{\frac{n}{n-1}%
}\right)  dx  &  =\int_{B_{R}\left(  0\right)  }\exp\left(  \alpha\left\vert
u^{\ast}\right\vert ^{\frac{n}{n-1}}\right)  dx.
\end{align*}
Moreover, the well-known P\'{o}lya-Szeg\"{o} inequality
\begin{equation}
\int_{B_{R}\left(  0\right)  }\left\vert \nabla u^{\ast}\right\vert ^{p}%
dx\leq\int_{\Omega}\left\vert \nabla u\right\vert ^{p}dx \label{1.2}%
\end{equation}
plays a crucial role in the approach of J. Moser.

\vskip0.5cm

As far as the existence of extremal functions of Moser's inequality, the first
breakthrough was due to the celebrated work of Carleson and Chang \cite{CC} in
which they proved that the supremum
\[
\sup_{u\in W_{0}^{1,N}\left(  \Omega\right)  , \int_{\Omega}\left\vert \nabla
u\right\vert ^{N}dx\leq1}\frac{1}{\left\vert \Omega\right\vert }\int_{\Omega
}\exp\left(  \alpha_{N} \left\vert u\right\vert ^{\frac{N}{N-1}}\right)  dx
\]
can be achieved when $\Omega$ is an Euclidean ball. This result came as a
surprise because it has been known that the Sobolev inequality does not have
extremal functions supported on any finite ball. Subsequently, existence of
extremal functions has been established on arbitrary domains in \cite{Flu},
\cite{Lin},  and on Riemannian manifolds in \cite{L1}, etc.

\vskip0.5cm

We note when the volume of $\Omega$ is infinite, the Trudinger-Moser
inequality (\ref{1.0}) becomes meaningless. Thus, it becomes interesting and
nontrivial to extend such inequalities to unbounded domains. Here we state the
following two such results in the Euclidean spaces.

\vskip0.5cm

We first recall the subcritical Moser-Trudinger inequality in the Euclidean
spaces established by Adachi and Tanaka \cite{AT}.

\vskip0.5cm

\textbf{Theorem B. }\textit{For any }$\alpha<\alpha_{N},~$\textit{there exists
a positive constant }$C_{N,\alpha}$\textit{ such that }$\forall u\in
W^{1,N}\left(
\mathbb{R}
^{N}\right)  ,~\left\Vert \nabla u\right\Vert _{N}\leq1:$\textit{ }%
\begin{equation}
\int_{%
\mathbb{R}
^{N}}\phi_{N}\left(  \alpha\left\vert u\right\vert ^{\frac{N}{N-1}}\right)
dx\leq C_{N,\alpha}\left\Vert u\right\Vert _{N}^{N}, \label{1.3}%
\end{equation}
\textit{where}%
\[
\phi_{N}(t)=e^{t}-%
{\displaystyle\sum\limits_{j=0}^{N-2}}
\frac{t^{j}}{j!}.
\]
\textit{The constant }$\alpha_{N}$\textit{ is sharp in the sense that the
supremum is infinity when }$\alpha\geq\alpha_{N}.$

\vskip0.5cm

We note in the above theorem, we only impose the restriction on the norm
$\int_{\mathbb{R}^{N}}\left\vert \nabla u\right\vert ^{N}$ without restricting
the full norm
\[
\left[  \int_{\mathbb{R}^{N}}\left\vert \nabla u\right\vert ^{N}+\tau
\int_{\mathbb{R}^{N}}\left\vert u\right\vert ^{N}\right]  ^{1/N}\leq1.
\]
The method in \cite{AT} requires a symmetrization argument which is not
available in many other non-Euclidean settings. The above inequality fails at
the critical case $\alpha=\alpha_{N}$. So it is natural to ask when the above
can be true when $\alpha=\alpha_{N}$. This is done by Ruf \cite{R} and Li and
Ruf \cite{LR} by using the restriction of the full norm of the Sobolev space
$W^{1,N}\left(  \mathbb{R}^{N}\right)  :$ $\left[  \int_{\mathbb{R}^{N}%
}\left\vert \nabla u\right\vert ^{N}+\tau\int_{\mathbb{R}^{N}}\left\vert
u\right\vert ^{N}\right]  ^{1/N}.$

\vskip0.5cm

\textbf{Theorem C. }\textit{For all }$0\leq\alpha\leq\alpha_{N}:$%
\begin{equation}
\underset{\left\Vert u\right\Vert \leq1}{\sup}\int_{%
\mathbb{R}
^{N}}\phi_{N}\left(  \alpha\left\vert u\right\vert ^{\frac{N}{N-1}}\right)
dx<\infty\label{1.4}%
\end{equation}
\textit{where }%
\[
\left\Vert u\right\Vert =\left(  \int_{%
\mathbb{R}
^{N}}\left(  \left\vert \nabla u\right\vert ^{N}+\left\vert u\right\vert
^{N}\right)  dx\right)  ^{1/N}.
\]
\textit{Moreover, this constant }$\alpha_{N}$\textit{ is sharp in the sense
that if }$\alpha>\alpha_{N}$\textit{, then the supremum is infinity.}\textbf{
}

\vskip0.5cm

\vskip0.5cm

Sharp critical and subcritical Trudinger-Moser inequalities on infinite volume
domains of the Heisenberg groups were also established in \cite{LaLu6,
LaLuTa2} by using a symmetrization-free method.

\medskip

The inequality (\ref{1.3}) uses the seminorm $\left\Vert \nabla u\right\Vert
_{N}$ and hence fails at the critical case $\alpha=\alpha_{N}$, the best
constant. Thus, it can be considered as a sharp subcritical Trudinger-Moser
inequality. In (\ref{1.4}), when using the full norm of $W^{1,N}\left(
\mathbb{R}
^{N}\right)  $, the best constant could be attained. Namely, the inequality
holds at the critical case $\alpha=\alpha_{N}$. Hence, (\ref{1.4}) is the
sharp critical Trudinger-Moser inequality.

\vskip0.5cm

Nevertheless, the main purpose of this paper is to show that in fact, these
two versions of critical and subcritical Trudinger-Moser type inequalities are
indeed equivalent. Since the critical Trudinger-Moser type inequality is
easier to study than the subcritical one in some occasions, and it is easier
to investigate subcritical Trudinger-Moser type inequality than the critical
one in other occasions, our paper suggests a new approach to both the critical
and subcritical Trudinger-Moser type inequalities.

\subsection{Adams inequalities}

It is worthy noting that symmetrization has been a very useful and efficient
(and almost inevitable) method when dealing with the sharp geometric
inequalities. Thus, it is very fascinating to investigate such sharp geometric
inequalities, in particular, the Trudinger-Moser type inequalities, in the
settings where the symmetrization is not available such as on the higher order
Sobolev spaces, the Heisenberg groups, Riemannian manifolds, sub-Riemannian
manifolds, etc. Indeed, in these settings, an inequality like (\ref{1.2}) is
not available. In these situations, the first break-through came from the work
of D. Adams \cite{A} when he attempted to set up the Trudinger-Moser
inequality in the higher order setting in Euclidean spaces. In fact, using a
new idea that one can write a smooth function as a convolution of a (Riesz)
potential with its derivatives, and then one can use the symmetrization for
this convolution, instead of the symmetrization of the higher order
derivatives, Adams proved the following inequality with boundary Dirichlet
condition \cite{A} which was extended to the Navier boundary condition in
\cite{Ta} when $\beta=0$, and then the first two authors extended it to the
case $0\le\beta<N$ \cite{LaLu5}. The following is taken from \cite{LaLu5}.

\vskip0.5cm

\textbf{Theorem D.} \textit{Let }$\Omega$\textit{ be an open and bounded set
in }$\mathbb{R}^{N}$\textit{. If }$m$ \textit{is a positive integer less than
}$N$, $0\leq\beta<N$, \textit{then there exists a constant} $C_{0}%
=C(N,m,\beta)>0$ \textit{such that for any} $u\in W_{N}^{m,\frac{N}{m}}%
(\Omega)$ \textit{and} $||\nabla^{m}u||_{L^{\frac{N}{m}}(\Omega)}\leq1$,
\textit{then}
\[
\frac{1}{|\Omega|^{1-\frac{\beta}{N}}}\int_{\Omega}\exp(\alpha\left(
1-\frac{\beta}{N}\right)  |u(x)|^{\frac{N}{N-m}})\frac{dx}{\left\vert
x\right\vert ^{\beta}}\leq C_{0}%
\]
\textit{for all} $\beta\leq\beta(N,m)$ \textit{where}
\[
\beta(N,\ m)\ =\left\{
\begin{array}
[c]{c}%
\frac{N}{w_{N-1}}\left[  \frac{\pi^{N/2}2^{m}\Gamma(\frac{m+1}{2})}%
{\Gamma(\frac{N-m+1}{2})}\right]  ^{\frac{N}{N-m}}\text{ }\mathrm{when}%
\,\,\,m\,\,\mathrm{is\,\,odd}\\
\frac{N}{w_{N-1}}\left[  \frac{\pi^{N/2}2^{m}\Gamma(\frac{m}{2})}{\Gamma
(\frac{N-m}{2})}\right]  ^{\frac{N}{N-m}}\text{ \ }\mathrm{when}%
\,\,\,m\,\,\mathrm{is\,\,even}%
\end{array}
\right.  .
\]
\textit{ Furthermore, the constant } $\beta(N,m)$ \textit{is optimal in the
sense that for any }$\mathit{\alpha>\beta(N,m)}$\textit{, the integral can be
made as large as possible.}\newline

\vskip0.5cm

Adams inequalities have been extended to compact Riemannian manifolds in
\cite{Fontana}. The Adams inequalities with optimal constants for high order
derivatives on domains of infinite volume were recently established by Ruf and
Sani in \cite{RS} in the case of even order derivatives and by Lam and Lu for
all order of derivatives including fractional orders \cite{LaLu4, LaLu7}. The
idea of \cite{RS} is to use the comparison principle for polyharmonic
equations (thus could deal with the case of even order of derivatives) and
thus involves some difficult construction of auxiliary functions. The argument
in \cite{LaLu4, LaLu7} uses the representation of the Bessel potentials and
thus avoids dealing with such a comparison principle. In particular, the
method developed in \cite{LaLu7} adapts the idea of deriving the sharp
Moser-Trudinger-Adams inequalities on domains of finite measure to the entire
spaces using the level sets of the functions under consideration. Thus, the
argument in \cite{LaLu7} does not use the symmetrization method and thus also
works for the sub-Riemannian setting such as the Heisenberg groups
\cite{LaLu6, LaLuTa2}. The following general version is taken from
\cite{LaLu7}.

\medskip

\textbf{Theorem (Lam-Lu, 2013)} \label{frac2}Let $0<\gamma<n$ be an arbitrary
real positive number, $p=\frac{n}{\gamma}$ and $\tau>0$. There holds%
\[
\underset{u\in W^{\gamma,p}\left(
\mathbb{R}
^{n}\right)  ,\left\Vert \left(  \tau I-\Delta\right)  ^{\frac{\gamma}{2}%
}u\right\Vert _{p}\leq1}{\sup}\int_{%
\mathbb{R}
^{n}}\phi\left(  \beta_{0}\left(  n,\gamma\right)  \left\vert u\right\vert
^{p^{\prime}}\right)  dx<\infty
\]
where
\begin{align*}
\phi(t)  &  =e^{t}-%
{\displaystyle\sum\limits_{j=0}^{j_{p}-2}}
\frac{t^{j}}{j!},\\
j_{p}  &  =\min\left\{  j\in%
\mathbb{N}
:j\geq p\right\}  \geq p.
\end{align*}
Furthermore this inequality is sharp in the sense that if $\beta_{0}\left(
n,\gamma\right)  $ is replaced by any $\beta>\beta_{0}\left(  n,\gamma\right)
$, then the supremum is infinite.

\medskip

Very little is known for existence of extremals for Adams inequalities.
Existence of extremal functions for the Adams inequality on bounded domains in
Euclidean spaces has been established in \cite{LY} and compact Riemannian
manifolds by \cite{LN} only when $N=4$ and $m=2$ and is still widely open in
other cases.

\subsection{ Our Main Results}

Though the Adachi-Tanaka type inequality in unbounded domains has been known
for quite some time, it is still not known what the following supremum is:
\[
\sup_{\left\Vert \nabla u\right\Vert _{N}\leq1}\frac{1}{\left\Vert
u\right\Vert _{N}^{N-\beta}}\int_{%
\mathbb{R}
^{N}}\phi_{N}\left(  \alpha\left(  1-\frac{\beta}{N}\right)  \left\vert
u\right\vert ^{\frac{N}{N-1}}\right)  \frac{dx}{\left\vert x\right\vert
^{\beta}}.
\]
In particular, we do not even know how the supremum behaves asymptotically
when $\alpha$ goes to $\alpha_{N}$.

\vskip0.5cm

The following theorem answers this question and provides the lower and upper
bounds asymptotically for the supremum.

\begin{theorem}
\label{improvedAT}\textit{Let }$N\geq2$\textit{, }$\alpha_{N}=N\left(
\frac{N\pi^{\frac{N}{2}}}{\Gamma(\frac{N}{2}+1)}\right)  ^{\frac{1}{N-1}}%
$,$~0\leq\beta<N$ \textit{and }$0\leq\alpha<\alpha_{N}.$ Denote%
\[
AT\left(  \alpha,\beta\right)  =\sup_{\left\Vert \nabla u\right\Vert _{N}%
\leq1}\frac{1}{\left\Vert u\right\Vert _{N}^{N-\beta}}\int_{%
\mathbb{R}
^{N}}\phi_{N}\left(  \alpha\left(  1-\frac{\beta}{N}\right)  \left\vert
u\right\vert ^{\frac{N}{N-1}}\right)  \frac{dx}{\left\vert x\right\vert
^{\beta}}.
\]
Then there exist positive constants $c=c\left(  N,\beta\right)  $ and
$C=C\left(  N,\beta\right)  $ such that when $\alpha$ is close enough to
$\alpha_{N}:$
\begin{equation}
\frac{c\left(  N,\beta\right)  }{\left(  1-\left(  \frac{\alpha}{\alpha_{N}%
}\right)  ^{N-1}\right)  ^{\left(  N-\beta\right)  /N}}\leq AT\left(
\alpha,\beta\right)  \leq\frac{C\left(  N,\beta\right)  }{\left(  1-\left(
\frac{\alpha}{\alpha_{N}}\right)  ^{N-1}\right)  ^{\left(  N-\beta\right)
/N}}. \label{1.3.1}%
\end{equation}
Moreover, the constant $\alpha_{N}$ is sharp in the sence that $AT\left(
\alpha_{N},\beta\right)  =\infty.$
\end{theorem}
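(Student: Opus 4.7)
My plan is to use a one-parameter rescaling that converts any admissible $u$ for $AT(\alpha,\beta)$ (with $\|\nabla u\|_N\le1$) into a function $v$ admissible for the critical (Ruf-type) weighted Trudinger-Moser inequality on $\mathbb{R}^N$ (with $\|v\|_{W^{1,N}}\le1$). For the upper bound, I will set $v(y)=\mu\,u(y/s)$ with $\mu=(\alpha/\alpha_N)^{(N-1)/N}$---the exponent is forced by the requirement that the coefficient $\alpha_N(1-\beta/N)\mu^{N/(N-1)}$ equal $\alpha(1-\beta/N)$---and pick $s>0$ so that $\|v\|_{W^{1,N}}=1$; since $\|\nabla u\|_N\le1$ and $\mu<1$, such an $s$ exists. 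A change of variable $y=sx$ then relates the weighted critical integral of $v$ to $s^{N-\beta}$ times the corresponding weighted subcritical integral of $u$. The critical weighted Trudinger-Moser inequality on $\mathbb{R}^N$ (the $\beta\in[0,N)$ extension of Theorem C, available via the level-set/Bessel-potential methods of \cite{LaLu5,LaLu7}) bounds the former by a constant $C_0(N,\beta)$. Using $\|\nabla u\|_N\le1$ to obtain $s^N\ge(1-(\alpha/\alpha_N)^{N-1})(\alpha_N/\alpha)^{N-1}/\|u\|_N^N$ and dividing through by $\|u\|_N^{N-\beta}$ yields the stated upper bound.

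For the lower bound I will test with the classical Moser sequence $M_k$ on the unit ball, normalized so that $\|\nabla M_k\|_N=1$, and tune $k$ to $\alpha$. The quantitative input is the standard asymptotic $\|M_k\|_N^N\sim\Gamma(N+1)/(N^{N+1}\log k)$. Writing $\delta=1-\alpha/\alpha_N$ and choosing $\log k=1/((N-\beta)\delta)$, I will evaluate the weighted $\phi_N$-integral on the central plateau $\{|x|\le1/k\}$ where $M_k$ is constant: the argument $(\alpha/\alpha_N)(N-\beta)\log k$ is large, so $\phi_N\sim\exp$ there, and the plateau contributes a positive multiple of $k^{-(N-\beta)\delta}=e^{-1}$. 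Thus the numerator of the Adachi-Tanaka ratio at $M_k$ stays bounded below by a constant depending only on $N$ and $\beta$, while the denominator $\|M_k\|_N^{N-\beta}\lesssim\delta^{(N-\beta)/N}$. This gives $AT(\alpha,\beta)\gtrsim\delta^{-(N-\beta)/N}$, and the elementary Bernoulli-type bound $\delta\le1-(\alpha/\alpha_N)^{N-1}$ (valid for $\alpha/\alpha_N\in(0,1)$ and $N\ge2$) converts this into the claimed lower bound. Monotonicity in $\alpha$ of the integrand combined with the divergence of the lower bound as $\alpha\uparrow\alpha_N$ then yields the sharpness assertion $AT(\alpha_N,\beta)=\infty$.

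The only nontrivial ingredient, and hence the main obstacle, is having the critical weighted Trudinger-Moser inequality on $\mathbb{R}^N$ with singular potential $|y|^{-\beta}$, $\beta\in[0,N)$, in hand: for $\beta=0$ this is Theorem C, and for $\beta\in(0,N)$ it is the $\mathbb{R}^N$ analogue of Theorem D, already established in the authors' prior work. Once this critical inequality is granted, the rest reduces to a scaling identity together with the classical Moser asymptotics.
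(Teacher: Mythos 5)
Your lower-bound argument is essentially the paper's: test with the Moser sequence $u_n$ supported in $B_1$, note $\|\nabla u_n\|_N=1$ and $\|u_n\|_N^N\approx 1/n$, evaluate the integral on the central plateau, and tune $n\approx 1/(1-\alpha/\alpha_N)$; the elementary observation $1-\alpha/\alpha_N\le 1-(\alpha/\alpha_N)^{N-1}$ for $N\ge 2$ closes it. The sharpness $AT(\alpha_N,\beta)=\infty$ is likewise handled by the same sequence in both treatments.

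Your upper-bound argument, however, takes a genuinely different route from the paper, and the difference matters for the paper's logical architecture. You rescale $v(y)=\mu\,u(y/s)$ with $\mu=(\alpha/\alpha_N)^{(N-1)/N}$ to land in the unit ball of $W^{1,N}(\mathbb{R}^N)$ and then invoke the critical singular Trudinger--Moser inequality on all of $\mathbb{R}^N$ (the $\beta\in[0,N)$ analogue of Theorem C). That is exactly the mechanism of the paper's Lemma \ref{consequence1}, with $a=b=N$: if $MT(\beta)<\infty$ is granted, one gets $AT(\alpha,\beta)\le\bigl(\tfrac{(\alpha/\alpha_N)^{N-1}}{1-(\alpha/\alpha_N)^{N-1}}\bigr)^{1-\beta/N}MT(\beta)$. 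But the paper deliberately does \emph{not} assume $MT(\beta)<\infty$ in the proof of Theorem \ref{improvedAT}; it says so explicitly just after the statement. Instead, the paper proves the upper bound self-containedly: it splits $\mathbb{R}^N$ along the super-level set $\Omega=\{u>(1-(\alpha/\alpha_N)^{N-1})^{1/N}\}$, bounds $|\Omega|$ by Chebyshev using $\|u\|_N=1$, controls the complement via $\phi_N(\alpha|u|^{N/(N-1)})\lesssim u^N$ on $\{u\le 1\}$, and on $\Omega$ passes to $v=u-(1-(\alpha/\alpha_N)^{N-1})^{1/N}\in W_0^{1,N}(\Omega)$ and applies only the classical bounded-domain (singular) Moser--Trudinger inequality. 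This is a weaker, more elementary input than the critical $\mathbb{R}^N$ inequality, and it is what allows the paper's Theorem \ref{MT} to then \emph{derive} $MT_{a,b}(\beta)<\infty$ from Theorem \ref{improvedAT}, establishing the advertised equivalence. Read within the paper's own development your upper-bound proof is circular; it becomes a correct (if less informative) argument only if one imports the critical singular TM inequality on $\mathbb{R}^N$ from elsewhere, as you acknowledge.
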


We note that we do not assume a priori the validity of the critical
Trudinger-Moser inequality with the restriction on the full norm (i.e., the
inequality (1.4)) in order to derive the above asymptotic behavior of the
supremum $AT(\alpha, \beta)$. We also mention that the upper bound in
(\ref{1.3.1}) in dimension two in the nonsingular case $\beta=0$ has also been
given in \cite{CST} using the sharp critical Trudinger-Moser inequality in
$\mathbb{R}^{2}$.

\vskip0.5cm

Next, we like to know how the supremum $AT(\alpha, \beta)$ we established in
Theorem \ref{improvedAT}  will provide a proof to the sharp critical
Trudinger-Moser inequality. Thus, this gives a new proof of the sharp critical
Trudinger-Moser inequality in all dimension $N$. We also answer the question
under for which $a$ and $b$ the critical Trudinger-Moser inequality holds
under the restriction of the full norm $\left\Vert \nabla u\right\Vert
_{N}^{a}+\left\Vert u\right\Vert _{N}^{b}\leq1$. Moreover, we establish the
precise relationship between the supremums for the critical and subcritical
Trudinger-Moser inequalities.

\begin{theorem}
\label{MT}\textit{Let }$N\geq2$\textit{,}$~0\leq\beta<N,~0<a,~b.$ Denote
\begin{align*}
MT_{a,b}\left(  \beta\right)   &  =\sup_{\left\Vert \nabla u\right\Vert
_{N}^{a}+\left\Vert u\right\Vert _{N}^{b}\leq1}\int_{%
\mathbb{R}
^{N}}\phi_{N}\left(  \alpha_{N}\left(  1-\frac{\beta}{N}\right)  \left\vert
u\right\vert ^{\frac{N}{N-1}}\right)  \frac{dx}{\left\vert x\right\vert
^{\beta}};\\
MT\left(  \beta\right)   &  =MT_{N,N}\left(  \beta\right)  .
\end{align*}
Then $MT_{a,b}\left(  \beta\right)  <\infty$ if and only if $b\leq N$. The
constant $\alpha_{N}$ is sharp. Moreover, we have the following identity:%
\begin{equation}
MT_{a,b}\left(  \beta\right)  =\sup_{\alpha\in\left(  0,\alpha_{N}\right)
}\left(  \frac{1-\left(  \frac{\alpha}{\alpha_{N}}\right)  ^{\frac{N-1}{N}a}%
}{\left(  \frac{\alpha}{\alpha_{N}}\right)  ^{\frac{N-1}{N}b}}\right)
^{\frac{N-\beta}{b}}AT\left(  \alpha,\beta\right)  . \label{1.3.2}%
\end{equation}
In particular, $MT\left(  \beta\right)  <\infty$ and
\[
MT\left(  \beta\right)  =\sup_{\alpha\in\left(  0,\alpha_{N}\right)  }\left(
\frac{1-\left(  \frac{\alpha}{\alpha_{N}}\right)  ^{N-1}}{\left(  \frac
{\alpha}{\alpha_{N}}\right)  ^{N-1}}\right)  ^{\frac{N-\beta}{N}}AT\left(
\alpha,\beta\right)  .
\]

\end{theorem}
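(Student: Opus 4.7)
The plan is to deduce Theorem~\ref{MT} from Theorem~\ref{improvedAT} via a rescaling argument. The key observation is that the homogeneous functional
\[
J(v;\alpha,\beta):=\frac{1}{\|v\|_N^{N-\beta}}\int_{\mathbb{R}^N}\phi_N\!\left(\alpha\bigl(1-\tfrac{\beta}{N}\bigr)|v|^{\frac{N}{N-1}}\right)\frac{dx}{|x|^\beta}
\]
entering the definition of $AT(\alpha,\beta)$, together with the constraint $\|\nabla v\|_N\le 1$, is \emph{scale invariant}: under the dilation $v_\lambda(x):=v(\lambda x)$, the gradient norm $\|\nabla v_\lambda\|_N$ is preserved, while $\|v_\lambda\|_N^{N-\beta}$ and the weighted integral both scale by the common factor $\lambda^{-(N-\beta)}$. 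This scale freedom decouples the $L^N$-norm from the gradient constraint and is precisely what allows the compound constraint $\|\nabla u\|_N^a+\|u\|_N^b\le 1$ to be reduced to the single-norm constraint in $AT(\alpha,\beta)$.

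For the upper bound in (\ref{1.3.2}), I would take any $u\not\equiv 0$ admissible for $MT_{a,b}(\beta)$, note that $A:=\|\nabla u\|_N\in(0,1)$, and set $v:=u/A$ and $\alpha:=\alpha_N A^{N/(N-1)}\in(0,\alpha_N)$. The identity $\alpha_N|u|^{N/(N-1)}=\alpha|v|^{N/(N-1)}$ together with the definition of $AT(\alpha,\beta)$ yields
\[
\int_{\mathbb{R}^N}\phi_N\!\left(\alpha_N\bigl(1-\tfrac{\beta}{N}\bigr)|u|^{\frac{N}{N-1}}\right)\frac{dx}{|x|^\beta}\le\|v\|_N^{N-\beta}\,AT(\alpha,\beta),
\]
and substituting $\|v\|_N=\|u\|_N/A\le(1-A^a)^{1/b}/A$ together with $A=(\alpha/\alpha_N)^{(N-1)/N}$ produces exactly the coefficient appearing in (\ref{1.3.2}). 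For the matching lower bound, given $\alpha\in(0,\alpha_N)$ and a near-maximizer $v$ of $AT(\alpha,\beta)$ with $\|\nabla v\|_N=1$, I would invoke the scale invariance of $J$ to replace $v$ by a dilation $v_\lambda$ with $\|v_\lambda\|_N=(1-A^a)^{1/b}/A$; then $u:=Av_\lambda$ satisfies $\|\nabla u\|_N^a+\|u\|_N^b=A^a+(1-A^a)=1$ and realises the bound up to an $\varepsilon$ loss.

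For the finiteness dichotomy, I would substitute the two-sided asymptotic (\ref{1.3.1}) into (\ref{1.3.2}). Using $1-t^c\sim c(1-t)$ as $t\to 1^-$, the integrand of the $\alpha$-supremum behaves as $\alpha\to\alpha_N^-$ like a positive constant times $(1-\alpha/\alpha_N)^{(N-\beta)(N-b)/(bN)}$, which is bounded if and only if $b\le N$. Away from $\alpha_N$ the integrand is obviously bounded since $AT(\alpha,\beta)$ is monotone nondecreasing in $\alpha$ and finite for $\alpha<\alpha_N$ by Theorem~\ref{improvedAT}. Sharpness of $\alpha_N$ in Theorem~\ref{MT}, namely that replacing $\alpha_N$ by any larger constant forces the supremum to be infinite, is inherited directly from the sharpness clause of Theorem~\ref{improvedAT}, or alternatively from testing against the standard Moser concentration sequence.

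The main obstacle I anticipate is the careful exponent bookkeeping in the upper- and lower-bound steps required to match the precise algebraic form of the right-hand side of (\ref{1.3.2}) rather than merely up to a multiplicative constant; once the scale invariance of $J$ is recognised, everything else is largely formal. A secondary subtlety is that the "only if" part of the finiteness claim genuinely uses the \emph{lower} bound in (\ref{1.3.1}), not merely its upper counterpart: without the matching lower bound one could only conclude $MT_{a,b}(\beta)<\infty$ for $b\le N$ but not $MT_{a,b}(\beta)=+\infty$ when $b>N$.
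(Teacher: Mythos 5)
Your proposal is correct and follows essentially the same route as the paper. The inequality $\sup_{\alpha}(\cdots)\,AT(\alpha,\beta)\le MT_{a,b}(\beta)$ that you derive from a near-maximizer of $AT$ is the content of the paper's Lemma~\ref{consequence1}; the reverse inequality $MT_{a,b}(\beta)\le\sup_{\alpha}(\cdots)\,AT(\alpha,\beta)$ is obtained in the paper from a maximizing sequence for $MT_{a,b}$ via exactly the dilation you describe; and the finiteness dichotomy plus the sharpness of $\alpha_N$ are read off from the two-sided asymptotic~(\ref{1.3.1}), precisely as you propose. Your organization is a little cleaner: you prove the identity~(\ref{1.3.2}) unconditionally first and then deduce the dichotomy by substitution, whereas the paper first proves $MT_{a,b}(\beta)<\infty$ for $b\le N$ directly from the upper bound in~(\ref{1.3.1}) (splitting into the cases $\theta>1/2$ and $\theta\le 1/2$ in the notation $\theta=\|\nabla u\|_N$), then proves the identity assuming finiteness, and finally argues by contradiction for $b>N$. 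Making the scale invariance of the $AT$-functional explicit, as you do, is what lets you avoid that case split and is a genuine streamlining, though not a different method. Your remark that the ``only if'' part truly needs the lower bound in~(\ref{1.3.1}) is correct and corresponds to inequality~(\ref{3.2}) in the paper's proof.
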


We now consider the sharp subcritical and critical Adams inequalities on
$W^{2,\frac{N}{2}}\left(
\mathbb{R}
^{N}\right)  ,~N\geq3$. Our first result is the following sharp subcritical
Adams inequality which provides the asymptotic behavior of the supremum (lower
and upper bounds) in this case.

\begin{theorem}
\label{ATA}\textit{Let }$N\geq3$\textit{, }$0\leq\beta<N$ \textit{and }%
$0\leq\alpha<\beta\left(  N,2\right)  .$ Denote%
\begin{align*}
ATA\left(  \alpha,\beta\right)   &  =\sup_{\left\Vert \Delta u\right\Vert
_{\frac{N}{2}}\leq1}\frac{1}{\left\Vert u\right\Vert _{\frac{N}{2}}^{\frac
{N}{2}\left(  1-\frac{\beta}{N}\right)  }}\int_{%
\mathbb{R}
^{N}}\frac{\phi_{N,2}\left(  \alpha\left(  1-\frac{\beta}{N}\right)
\left\vert u\right\vert ^{\frac{N}{N-2}}\right)  }{\left\vert x\right\vert
^{\beta}}dx;\\
\phi_{N,2}\left(  t\right)   &  =%
{\displaystyle\sum\limits_{j\in\mathbb{N}:j\geq\frac{N-2}{2}}}
\frac{t^{j}}{j!}.
\end{align*}
Then there exist positive constants $c=c\left(  N,\beta\right)  $ and
$C=C\left(  N,\beta\right)  $ such that when $\alpha$ is close enough to
$\beta\left(  N,2\right)  :$
\begin{equation}
\frac{c\left(  N,\beta\right)  }{\left[  1-\left(  \frac{\alpha}{\beta\left(
N,2\right)  }\right)  ^{\frac{N-2}{2}}\right]  ^{1-\frac{\beta}{N}}}\leq
ATA\left(  \alpha,\beta\right)  \leq\frac{C\left(  N,\beta\right)  }{\left[
1-\left(  \frac{\alpha}{\beta\left(  N,2\right)  }\right)  ^{\frac{N-2}{2}%
}\right]  ^{1-\frac{\beta}{N}}}. \label{1.3.3}%
\end{equation}
Moreover, the constant $\beta\left(  N,2\right)  $ is sharp in the sence that
$AT\left(  \alpha_{N},\beta\right)  =\infty.$
\end{theorem}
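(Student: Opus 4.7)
The plan is to combine a two-parameter rescaling with the sharp critical Adams inequality on $\mathbb{R}^{N}$ established in \cite{RS, LaLu4, LaLu7}. Unlike the Moser--Trudinger setting of Theorem \ref{improvedAT}, where P\'olya--Szeg\"o symmetrization is available, here one must exploit the critical Adams inequality directly, taking care to use the version stated with the ``product-form'' full norm $\|\Delta u\|_{N/2}^{N/2}+\|u\|_{N/2}^{N/2}$ rather than the Bessel potential norm.

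For the upper bound, fix $u\in W^{2,N/2}(\mathbb{R}^{N})$ with $\|\Delta u\|_{N/2}\le 1$, set $M=\|u\|_{N/2}$ and $q=N/(N-2)$, and consider the rescaling $v(x)=\gamma u(\lambda x)$ with
\[
\gamma=\bigl(\alpha/\beta(N,2)\bigr)^{(N-2)/N},\qquad
\lambda^{N}=\frac{M^{N/2}}{(\beta(N,2)/\alpha)^{(N-2)/2}-1}.
\]
A direct scaling computation gives
\[
\|\Delta v\|_{N/2}^{N/2}+\|v\|_{N/2}^{N/2}\le\gamma^{N/2}\bigl(1+\lambda^{-N}M^{N/2}\bigr)=1,
\]
and the identity $\beta(N,2)\gamma^{q}=\alpha$ yields $\beta(N,2)|v(x)|^{q}=\alpha|u(\lambda x)|^{q}$. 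The critical Adams inequality applied to $v$ bounds $\int \phi_{N,2}(\beta(N,2)(1-\beta/N)|v|^{q})/|x|^{\beta}\,dx$ by a constant $K(N,\beta)$. After the change of variables $y=\lambda x$ (which contributes a factor $\lambda^{N-\beta}$), this gives a bound on $\int \phi_{N,2}(\alpha(1-\beta/N)|u|^{q})/|x|^{\beta}\,dx$. Since $\lambda^{N-\beta}=M^{(N-\beta)/2}\bigl[(\beta(N,2)/\alpha)^{(N-2)/2}-1\bigr]^{-(N-\beta)/N}$, dividing by $\|u\|_{N/2}^{(N-\beta)/2}=M^{(N-\beta)/2}$ cancels the $M$-dependence and yields the claimed estimate, since $(\beta(N,2)/\alpha)^{(N-2)/2}-1$ is comparable to $1-(\alpha/\beta(N,2))^{(N-2)/2}$ as $\alpha\uparrow\beta(N,2)$.

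For the lower bound (and hence the sharpness assertion $ATA(\beta(N,2),\beta)=\infty$), I would construct a radial Adams-type sequence $\{u_{k}\}$ modelled on Adams' logarithmic cutoff profiles \cite{A}, normalized so that $\|\Delta u_{k}\|_{N/2}=1$ and $u_{k}(0)\to\infty$ at the appropriate rate. Explicit computation of $\|u_{k}\|_{N/2}$ and of the weighted exponential integral, followed by calibration of the concentration scale against $1-(\alpha/\beta(N,2))^{(N-2)/2}$, should produce a matching lower bound; letting $\alpha\uparrow\beta(N,2)$ together with the optimal choice in the sequence then forces divergence.

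The main obstacle is the bookkeeping in the upper bound: the exponent $1-\beta/N$ on the right-hand side of (\ref{1.3.3}) is delicate and crucially depends on using the product-form critical Adams inequality. If one instead applies the Bessel-norm version together with the Minkowski estimate $\|(I-\Delta)v\|_{N/2}\le\|v\|_{N/2}+\|\Delta v\|_{N/2}$, one recovers only the weaker exponent $(N-\beta)/2$. It is precisely the clean cancellation $\lambda^{N-\beta}/M^{(N-\beta)/2}=\epsilon^{-(N-\beta)/N}$, with $\epsilon=(\beta(N,2)/\alpha)^{(N-2)/2}-1$, that forces the correct normalization of $v$ and hence the sharp exponent $1-\beta/N$.
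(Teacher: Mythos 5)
Your upper-bound argument is a correct rescaling computation, and it essentially reproduces the paper's Lemma \ref{consequence2} in the special case $a=b=N/2$: with $\gamma=(\alpha/\beta(N,2))^{(N-2)/N}$ and your $\lambda$ one indeed gets $\|\Delta v\|_{N/2}^{N/2}+\|v\|_{N/2}^{N/2}\le 1$, $\beta(N,2)\gamma^{N/(N-2)}=\alpha$, and the factor $\lambda^{N-\beta}/M^{(N-\beta)/2}$ cancels cleanly to give the exponent $(N-\beta)/N$. But you have taken a genuinely different route from the paper, and one that conflicts with the logical architecture of the result. The paper proves the upper bound in \eqref{1.3.3} \emph{without} assuming any critical Adams inequality on $\mathbb{R}^{N}$: it normalizes $\|u\|_{N/2}=1$, introduces the level set $\Omega(u)=\{u>[1-(\alpha/\beta(N,2))^{(N-2)/2}]^{2/N}\}$, bounds $|\Omega(u)|$ by Chebyshev, applies the bounded-domain Adams inequality (Theorem D) to the shift $v=u-[1-(\alpha/\beta(N,2))^{(N-2)/2}]^{2/N}$ on $\Omega(u)$, and controls the complement by $|u|^{N/2}$. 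The authors stress explicitly that ``no version of Theorem \ref{A} is assumed in order to prove Theorem \ref{ATA}''; the reason is that Theorem \ref{A} is subsequently \emph{derived} from Theorem \ref{ATA}. By reaching for the Ruf--Sani/Lam--Lu critical Adams inequality as a black box, you prove the inequality but forfeit the self-contained, one-direction-at-a-time structure that is the paper's main contribution. If your external citation were replaced by the paper's own Theorem \ref{A}, the argument would be outright circular; as written it is not circular, but it demotes Theorem \ref{ATA} from an independent building block to a corollary of the critical inequality.

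For the lower bound and the sharpness claim $ATA(\beta(N,2),\beta)=\infty$, your proposal is essentially a promissory note. The paper needs two separate constructions: Adams' smooth cutoff profiles $\psi_r$ (with the associated $\|\Delta\psi_r\|_{N/2}$ estimates from \cite{A}) to force $ATA(\beta(N,2),\beta)=\infty$, and a second, explicit polynomial-inside/logarithmic-outside sequence $u_k$ whose concentration scale $\ln k$ is calibrated to $(1-\alpha/\beta(N,2))^{-1}$ to produce the quantitative lower bound in \eqref{1.3.3}. Both require nontrivial computation of $\|\Delta u_k\|_{N/2}$, $\|u_k\|_{N/2}$, and the weighted integral; ``should produce a matching lower bound'' does not substitute for carrying these out. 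Finally, your aside that the Bessel-norm version would yield exponent $(N-\beta)/2$ is a red herring: the delicate point is not Minkowski's inequality but which ball the rescaled $v$ lands in, and the paper's level-set argument sidesteps this entirely by never leaving the bounded-domain setting.
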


The next theorem offers a precise relationship between the supremums of
critical and subcritical Adams inequalities. Thus, it also provides a new
approach of proving one of the critical and subcritical Adams inequalities
from the other.

\begin{theorem}
\label{A}Let $N\geq3$, $0\leq\beta<N,~0<a,~b.$ We denote:%
\begin{align*}
A_{a,b}\left(  \beta\right)   &  =\sup_{\left\Vert \Delta u\right\Vert
_{\frac{N}{2}}^{a}+\left\Vert u\right\Vert _{\frac{N}{2}}^{b}\leq1}\int_{%
\mathbb{R}
^{N}}\frac{\phi_{N,2}\left(  \beta\left(  N,2\right)  \left(  1-\frac{\beta
}{N}\right)  \left\vert u\right\vert ^{\frac{N}{N-2}}\right)  }{\left\vert
x\right\vert ^{\beta}}dx;\\
A_{\frac{N}{2},\frac{N}{2}}\left(  \beta\right)   &  =A\left(  \beta\right)  ;
\end{align*}
Then $A_{a,b}\left(  \beta\right)  <\infty$ if and only if $b\leq\frac{N}{2}$.
The constant $\beta\left(  N,2\right)  $ is sharp. Moreover, we have the
following identity:%
\begin{equation}
A_{a,b}\left(  \beta\right)  =\sup_{\alpha\in\left(  0,\beta\left(
N,2\right)  \right)  }\left(  \frac{1-\left(  \frac{\alpha}{\beta\left(
N,2\right)  }\right)  ^{\frac{N-2}{N}a}}{\left(  \frac{\alpha}{\beta\left(
N,2\right)  }\right)  ^{\frac{N-2}{N}b}}\right)  ^{\frac{N-\beta}{2b}%
}ATA\left(  \alpha,\beta\right)  . \label{1.3.4}%
\end{equation}
In particular, $A\left(  \beta\right)  <\infty$ and
\[
A\left(  \beta\right)  =\sup_{\alpha\in\left(  0,\beta\left(  N,2\right)
\right)  }\left(  \frac{1-\left(  \frac{\alpha}{\beta\left(  N,2\right)
}\right)  ^{\frac{N-2}{2}}}{\left(  \frac{\alpha}{\beta\left(  N,2\right)
}\right)  ^{\frac{N-2}{2}}}\right)  ^{\frac{N-\beta}{N}}ATA\left(
\alpha,\beta\right)  .
\]

\end{theorem}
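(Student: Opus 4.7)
The plan is to mirror the argument for Theorem \ref{MT}, extracting the identity \eqref{1.3.4} from a single rescaling that translates between the full-norm constraint $\|\Delta u\|_{N/2}^a+\|u\|_{N/2}^b\leq 1$ and the seminorm constraint $\|\Delta v\|_{N/2}\leq 1$ of Theorem \ref{ATA}. Throughout, I write $t=\alpha/\beta(N,2)\in(0,1)$ and $s=t^{(N-2)/N}$, so that $s^{N/(N-2)}\beta(N,2)=\alpha$.

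For the upper bound in \eqref{1.3.4}, given $u$ with $\|\Delta u\|_{N/2}^a+\|u\|_{N/2}^b\leq 1$, the only nontrivial case is $s:=\|\Delta u\|_{N/2}\in(0,1)$ (both extremes force $u\equiv 0$; $s=0$ by an $L^{N/2}$-Liouville argument for harmonic functions in $W^{2,N/2}(\mathbb{R}^N)$). I set $w:=u/s$, so $\|\Delta w\|_{N/2}=1$, and choose $\alpha:=s^{N/(N-2)}\beta(N,2)$; the identity $|u|^{N/(N-2)}=s^{N/(N-2)}|w|^{N/(N-2)}$ converts the exponent so that Theorem \ref{ATA} bounds the integral by $ATA(\alpha,\beta)\,\|w\|_{N/2}^{(N-\beta)/2}$. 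Combining with $\|w\|_{N/2}=\|u\|_{N/2}/s\leq(1-s^a)^{1/b}/s$ together with the identity $s^{(N-\beta)/2}=s^{b(N-\beta)/(2b)}$ produces exactly the right-hand factor in \eqref{1.3.4}.

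For the lower bound, I fix $\alpha\in(0,\beta(N,2))$ and $\epsilon>0$, pick $w$ nearly extremal for $ATA(\alpha,\beta)$ with $r:=\|\Delta w\|_{N/2}\leq 1$ and $n:=\|w\|_{N/2}>0$, and define $u(x):=s\,w(\mu x)$ with the dilation $\mu$ chosen to saturate the full-norm constraint: $\mu^{2b}=s^b n^b/(1-s^a r^a)$. The change of variable $y=\mu x$ pulls a factor $\mu^{\beta-N}$ out of the integral; substituting $\mu^{\beta-N}=((1-s^a r^a)/(s^b n^b))^{(N-\beta)/(2b)}$, the powers of $n$ cancel against the $n^{(N-\beta)/2}$ from the ATA estimate, and $r\leq 1$ gives $1-s^a r^a\geq 1-s^a$, closing the matching lower bound and establishing \eqref{1.3.4}.

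The dichotomy $b\leq N/2$ and sharpness of $\beta(N,2)$ then follow asymptotically. Inserting the two-sided bound of Theorem \ref{ATA}, as $t\to 1^-$ the product of the combinatorial factor in \eqref{1.3.4} and $ATA(\alpha,\beta)$ behaves like $(1-t)^{(N-\beta)(N-2b)/(2bN)}$, which stays bounded iff $b\leq N/2$ and diverges otherwise; near $t=0$ boundedness is automatic since $ATA(\alpha,\beta)$ vanishes polynomially. Sharpness of $\beta(N,2)$ is inherited from Theorem \ref{ATA}'s sharpness via a concentrating Moser-type sequence, amplitude-rescaled to fit the full-norm constraint. The main technical obstacle is the exponent bookkeeping in the lower-bound calculation---verifying that the combined amplitude/dilation scaling $u(x)=s\,w(\mu x)$ makes the $n$ dependence cancel precisely; once \eqref{1.3.4} is established, the finiteness dichotomy and sharpness are routine consequences of Theorem \ref{ATA}.
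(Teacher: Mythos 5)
Your proposal is correct and tracks the paper's proof closely: both directions of \eqref{1.3.4} via an amplitude-plus-dilation rescaling between the full-norm constraint and the seminorm constraint of $ATA$, the nearly-extremal sequence for the lower bound, and the asymptotic analysis as $\alpha\uparrow\beta(N,2)$ for the $b$-dichotomy. Two variations are worth noting. First, in the upper-bound direction you dispense with the dilation entirely, setting $w=u/\|\Delta u\|_{N/2}$ and using the $\|w\|_{N/2}^{(N-\beta)/2}$ normalization already built into $ATA$; the paper instead dilates to force $\|v_n\|_{N/2}\leq 1$ and then drops that factor. These are equivalent, and your version is cleaner. Second, and more substantively, the paper does \emph{not} deduce finiteness of $A_{a,b}(\beta)$ for $b\leq N/2$ from the identity: it proves it directly, splitting into $\|\Delta u\|_{N/2}>1/4$ (near-critical, where the asymptotic \eqref{1.3.3} applies) and $\|\Delta u\|_{N/2}\leq 1/4$ (a fixed subcritical level $\alpha=\beta(N,2)/4^{N/(N-2)}$, so only plain finiteness of $ATA$ is needed). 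Your route—finiteness straight from \eqref{1.3.4} plus the two-sided $ATA$ bounds—additionally requires that the right-hand side of \eqref{1.3.4} stay bounded as $\alpha\to 0^+$, since the prefactor blows up like $s^{-(N-\beta)/2}$ there (with $s=(\alpha/\beta(N,2))^{(N-2)/N}$). You assert that $ATA(\alpha,\beta)$ vanishes polynomially, which is true (from $\phi_{N,2}(\alpha r)\leq(\alpha/\alpha')^{j_0}\phi_{N,2}(\alpha' r)$ for $\alpha<\alpha'$ with $j_0=\min\{j\in\mathbb{N}:j\geq (N-2)/2\}$, one gets $ATA(\alpha,\beta)\lesssim\alpha^{j_0}$, and $Nj_0/(N-2)\geq N/2\geq(N-\beta)/2$ closes the estimate), but this is not part of Theorem \ref{ATA}'s statement—the asymptotic there is only for $\alpha$ near $\beta(N,2)$—so it needs a line of proof rather than a bare assertion. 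With that filled in, the argument is complete and faithfully reproduces the paper's result.
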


Finally, we will study the following improved sharp critical Adams inequality
under the assumption that a version of the sharp subcritical Adams inequality
holds for the factional order derivatives:

\begin{theorem}
\label{generalA}Let $0<\gamma<N$ be an arbitrary real positive number,
$p=\frac{N}{\gamma}$,~$0\leq\alpha<\beta_{0}\left(  N,\gamma\right)  =\frac
{N}{\omega_{N-1}}\left[  \frac{\pi^{\frac{N}{2}}2^{\gamma}\Gamma\left(
\frac{\gamma}{2}\right)  }{\Gamma\left(  \frac{N-\gamma}{2}\right)  }\right]
^{\frac{p}{p-1}},~0\leq\beta<N,~0<a,~b$. We note
\begin{align*}
GATA\left(  \alpha,\beta\right)   &  =\sup_{u\in W^{\gamma,p}\left(
\mathbb{R}
^{N}\right)  :\left\Vert \left(  -\Delta\right)  ^{\frac{\gamma}{2}%
}u\right\Vert _{p}\leq1}\frac{1}{\left\Vert u\right\Vert _{p}^{p\left(
1-\frac{\beta}{N}\right)  }}\int_{%
\mathbb{R}
^{N}}\frac{\phi_{N,\gamma}\left(  \alpha\left(  1-\frac{\beta}{N}\right)
\left\vert u\right\vert ^{\frac{p}{p-1}}\right)  }{\left\vert x\right\vert
^{\beta}}dx;\\
GA_{a,b}\left(  \beta\right)   &  =\sup_{u\in W^{\gamma,p}\left(
\mathbb{R}
^{N}\right)  :\left\Vert \left(  -\Delta\right)  ^{\frac{\gamma}{2}%
}u\right\Vert _{p}^{a}+\left\Vert u\right\Vert _{p}^{b}\leq1}\int_{%
\mathbb{R}
^{N}}\frac{\phi_{N,\gamma}\left(  \beta_{0}\left(  N,\gamma\right)  \left(
1-\frac{\beta}{N}\right)  \left\vert u\right\vert ^{\frac{p}{p-1}}\right)
}{\left\vert x\right\vert ^{\beta}}dx
\end{align*}
where
\[
\phi_{N,\gamma}\left(  t\right)  =%
{\displaystyle\sum\limits_{j\in\mathbb{N}:j\geq p-1}}
\frac{t^{j}}{j!}.
\]
Assume that $GATA\left(  \alpha,\beta\right)  <\infty$ and there exists a
constant $C\left(  N,\gamma,\beta\right)  >0$ such that%
\begin{equation}
GATA\left(  \alpha,\beta\right)  \leq\frac{C\left(  N,\gamma,\beta\right)
}{\left(  1-\left(  \frac{\alpha}{\beta_{0}\left(  N,\gamma\right)  }\right)
^{\frac{p-1}{p}}\right)  } \label{1.3.5}%
\end{equation}
Then when $b\leq p$, we have $GA_{a,b}\left(  \beta\right)  <\infty.$ In
particular $GA_{p,p}\left(  \beta\right)  <\infty.$
\end{theorem}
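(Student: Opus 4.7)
The plan is to adapt the scaling argument used in Theorems \ref{MT} and \ref{A} to the general fractional setting. The goal is to produce an identity of the form
\[
GA_{a,b}(\beta) \leq \sup_{\alpha \in (0,\, \beta_0(N,\gamma))} \left(\frac{1 - (\alpha/\beta_0(N,\gamma))^{(p-1)a/p}}{(\alpha/\beta_0(N,\gamma))^{(p-1)b/p}}\right)^{p(N-\beta)/(Nb)} GATA(\alpha,\beta),
\]
in direct analogy with (\ref{1.3.2}) and (\ref{1.3.4}), and then to invoke the asymptotic hypothesis (\ref{1.3.5}) to verify that the right-hand side is finite when $b \leq p$.

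\textbf{Step 1: Normalization.} Fix $u \in W^{\gamma,p}(\mathbb{R}^{N})$ with $\|(-\Delta)^{\gamma/2}u\|_{p}^{a} + \|u\|_{p}^{b} \leq 1$, and set $s = \|(-\Delta)^{\gamma/2}u\|_{p}$, $t = \|u\|_{p}$, so that $s^{a} + t^{b} \leq 1$. Define $v = u/s$, which satisfies $\|(-\Delta)^{\gamma/2} v\|_{p} = 1$ and $\|v\|_{p} = t/s$. Pick $\alpha \in (0, \beta_0(N,\gamma))$ via $s = (\alpha/\beta_0(N,\gamma))^{(p-1)/p}$, which is precisely the choice that turns the critical integrand for $u$ into the subcritical integrand for $v$, since then $\alpha |v|^{p/(p-1)} = \beta_0(N,\gamma) |u|^{p/(p-1)}$.

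\textbf{Step 2: Apply the subcritical inequality.} The definition of $GATA(\alpha,\beta)$ applied to $v$ gives
\[
\int_{\mathbb{R}^{N}} \frac{\phi_{N,\gamma}(\beta_0(N,\gamma)(1-\beta/N)|u|^{p/(p-1)})}{|x|^{\beta}} \, dx \leq GATA(\alpha,\beta)\,(t/s)^{p(N-\beta)/N}.
\]
Inserting $t^{b} \leq 1 - s^{a} = 1 - (\alpha/\beta_0(N,\gamma))^{(p-1)a/p}$ and the relation $s = (\alpha/\beta_0(N,\gamma))^{(p-1)/p}$ gives the displayed bound. Taking the supremum over admissible $u$ yields the identity above and reduces the finiteness of $GA_{a,b}(\beta)$ to the boundedness of the right-hand side as a function of $\alpha$.

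\textbf{Step 3: Endpoint analysis.} On any compact subinterval of $(0, \beta_0(N,\gamma))$ both factors are finite by the hypothesis $GATA(\alpha,\beta) < \infty$, so only the two endpoints matter. As $\alpha \to 0^+$, the elementary monotonicity $\phi_{N,\gamma}(\lambda t) \leq \lambda^{p-1} \phi_{N,\gamma}(t)$ for $\lambda \in [0,1]$ (immediate from the series definition, since each summation index $j$ satisfies $j \geq p-1$ and hence $\lambda^{j} \leq \lambda^{p-1}$) yields $GATA(\alpha,\beta) = O(\alpha^{p-1})$, which comfortably dominates the vanishing factor $(\alpha/\beta_0(N,\gamma))^{(p-1)(N-\beta)/N}$ in the denominator. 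As $\alpha \to \beta_0(N,\gamma)^-$, using the Taylor expansion $1 - x^{r} \sim r(1-x)$ at $x = 1$ together with (\ref{1.3.5}) reduces the question to the sign of an exponent of $(1 - \alpha/\beta_0(N,\gamma))$; matching powers, the condition $b \leq p$ is exactly what is needed to make this exponent nonnegative.

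\textbf{Main obstacle.} The key point is the endpoint $\alpha \to \beta_0(N,\gamma)^-$: this is where the quantitative blow-up rate provided by the hypothesis (\ref{1.3.5}) is consumed, and it is what plays here the role that Theorems \ref{improvedAT} and \ref{ATA} play in the proofs of Theorems \ref{MT} and \ref{A}. Without this explicit asymptotic control on $GATA(\alpha,\beta)$ one cannot balance the vanishing of the scaling factor against the blow-up of $GATA(\alpha,\beta)$, and the passage from the subcritical to the critical inequality would fail. All other steps are algebraic and parallel to the arguments already carried out for $\gamma = 1$ and $\gamma = 2$.
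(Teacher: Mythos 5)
Your approach is essentially the paper's: normalize to reduce to the subcritical $GATA$ bound and then consume the asymptotic hypothesis (\ref{1.3.5}) at the endpoint $\alpha\to\beta_0(N,\gamma)^-$. You handle the normalization by dividing by $s=\|(-\Delta)^{\gamma/2}u\|_p$ alone rather than also rescaling $x$ (the paper sets $v(x)=u(\lambda x)/\theta$ and changes variables), and you replace the paper's case split $\theta>1/2^{\gamma}$ versus $\theta\leq 1/2^{\gamma}$ by examining the two endpoints of the resulting supremum over $\alpha$; both ways are equivalent, and your $\alpha\to 0^+$ estimate $GATA(\alpha,\beta)=O(\alpha^{p-1})$ is a clean substitute for the paper's trivial small-$\theta$ case.

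There is, however, one point you assert without verifying, and as stated it does not hold. At $\alpha\to\beta_0(N,\gamma)^-$, your prefactor decays like $(1-\alpha/\beta_0)^{p(N-\beta)/(Nb)}$, while (\ref{1.3.5}) taken literally gives $GATA(\alpha,\beta)\lesssim (1-\alpha/\beta_0)^{-1}$. The product stays bounded only when $p(N-\beta)/(Nb)\geq 1$, i.e.\ $b\leq p(1-\beta/N)$, which is strictly smaller than $p$ whenever $\beta>0$. To actually obtain $b\leq p$ one needs (\ref{1.3.5}) with exponent $1-\beta/N$ on the right-hand side, i.e.\ $GATA(\alpha,\beta)\lesssim (1-(\alpha/\beta_0)^{(p-1)/p})^{-(1-\beta/N)}$, which is the natural analogue of (\ref{1.3.1}) and (\ref{1.3.3}); indeed the paper's own computation silently uses that exponent, suggesting (\ref{1.3.5}) as printed has a typo. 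Your claim that "matching powers, the condition $b\leq p$ is exactly what is needed" is therefore not a consequence of (\ref{1.3.5}) as written — you should either note the corrected exponent or track through the algebra and see that you only get the smaller range of $b$.
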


Though we have to assume a sharp subcritical Adams inequality (\ref{1.3.5}),
the main idea of Theorem \ref{generalA} is that since $GATA\left(
\alpha,\beta\right)  $ is actually subcritical, i.e. $\alpha$ is strictly less
than the critical level $\beta_{0}\left(  N,\gamma\right)  $, it is easier to
study than $GA_{a,b}\left(  \beta\right)  $.\ Hence, it suggests a new
approach in the study of $GA_{a,b}\left(  \beta\right)  $.

\section{Some lemmata}

\begin{lemma}
\label{normalized1}%
\[
AT\left(  \alpha,\beta\right)  =\sup_{\left\Vert \nabla u\right\Vert _{N}%
\leq1;\left\Vert u\right\Vert _{N}=1}\int_{%
\mathbb{R}
^{N}}\phi_{N}\left(  \alpha\left(  1-\frac{\beta}{N}\right)  \left\vert
u\right\vert ^{\frac{N}{N-1}}\right)  \frac{dx}{\left\vert x\right\vert
^{\beta}}.
\]

\begin{proof}
For any $u\in W^{1,N}\left(
\mathbb{R}
^{N}\right)  :\left\Vert \nabla u\right\Vert _{N}\leq1$, we define%
\begin{align*}
v\left(  x\right)   &  =u\left(  \lambda x\right) \\
\lambda &  =\left\Vert u\right\Vert _{N}.
\end{align*}
Then,
\[
\nabla v\left(  x\right)  =\lambda\nabla u\left(  \lambda x\right)  .
\]
Hence
\[
\left\Vert \nabla v\right\Vert _{N}=\left\Vert \nabla u\right\Vert _{N}%
\leq1;\left\Vert v\right\Vert _{N}=1,
\]
and
\begin{align*}
&  \int_{%
\mathbb{R}
^{N}}\phi_{N}\left(  \alpha\left(  1-\frac{\beta}{N}\right)  \left\vert
v\left(  x\right)  \right\vert ^{\frac{N}{N-1}}\right)  \frac{dx}{\left\vert
x\right\vert ^{\beta}}\\
&  =\int_{%
\mathbb{R}
^{N}}\phi_{N}\left(  \alpha\left(  1-\frac{\beta}{N}\right)  \left\vert
u\left(  \lambda x\right)  \right\vert ^{\frac{N}{N-1}}\right)  \frac
{dx}{\left\vert x\right\vert ^{\beta}}\\
&  =\frac{1}{\lambda^{N-\beta}}\int_{%
\mathbb{R}
^{N}}\phi_{N}\left(  \alpha\left(  1-\frac{\beta}{N}\right)  \left\vert
u\left(  \lambda x\right)  \right\vert ^{\frac{N}{N-1}}\right)  \frac{d\left(
\lambda x\right)  }{\left\vert \lambda x\right\vert ^{\beta}}\\
&  =\frac{1}{\left\Vert u\right\Vert _{N}^{N-\beta}}\int_{%
\mathbb{R}
^{N}}\phi_{N}\left(  \alpha\left(  1-\frac{\beta}{N}\right)  \left\vert
u\right\vert ^{\frac{N}{N-1}}\right)  \frac{dx}{\left\vert x\right\vert
^{\beta}}.
\end{align*}

\end{proof}
\end{lemma}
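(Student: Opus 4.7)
The plan is a scaling argument that absorbs the normalizing factor $\|u\|_N^{N-\beta}$ into the function itself via a suitable dilation. One direction is immediate: any admissible $u$ on the right-hand side satisfies $\|u\|_N=1$ and $\|\nabla u\|_N\leq 1$, so the denominator $\|u\|_N^{N-\beta}$ in the definition of $AT(\alpha,\beta)$ equals $1$ and the integrands coincide; hence the right-hand supremum is a supremum over a subset of the admissible class defining $AT(\alpha,\beta)$ and is therefore $\leq AT(\alpha,\beta)$.

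For the reverse inequality, given any nontrivial $u\in W^{1,N}(\mathbb{R}^N)$ with $\|\nabla u\|_N\leq 1$, I introduce the dilation
\[
v(x)=u(\lambda x),\qquad \lambda=\|u\|_N.
\]
Because the $L^N$-norm of the gradient is scale-invariant in dimension $N$ (the Jacobian $\lambda^{-N}$ exactly cancels the $\lambda^{N}$ from $|\nabla v|^N=\lambda^N|\nabla u(\lambda x)|^N$), we get $\|\nabla v\|_N=\|\nabla u\|_N\leq 1$, while the choice of $\lambda$ forces $\|v\|_N=1$. Hence $v$ is admissible on the right-hand side.

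The key calculation is how the weighted nonlinear integral transforms. Since $|x|^{-\beta}$ is homogeneous of degree $-\beta$ and $dx$ contributes $\lambda^{-N}$, the change of variable $y=\lambda x$ produces an overall factor $\lambda^{\beta-N}=\|u\|_N^{\beta-N}$, giving
\[
\int_{\mathbb{R}^N}\phi_N\!\left(\alpha\left(1-\tfrac{\beta}{N}\right)|v|^{\frac{N}{N-1}}\right)\frac{dx}{|x|^\beta}=\frac{1}{\|u\|_N^{N-\beta}}\int_{\mathbb{R}^N}\phi_N\!\left(\alpha\left(1-\tfrac{\beta}{N}\right)|u|^{\frac{N}{N-1}}\right)\frac{dy}{|y|^\beta}.
\]
Thus every ratio defining $AT(\alpha,\beta)$ is realized by an admissible $v$ on the right-hand side, and taking suprema yields the opposite inequality.

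I do not expect a real obstacle; the content is a one-line homogeneity identity. The only point requiring mild attention is that the scaling exponent $\beta-N$ be genuinely well-defined, which is guaranteed by the standing assumption $0\leq\beta<N$, and that $u$ be nontrivial so that $\lambda>0$ (the trivial case $u\equiv 0$ is handled separately and contributes nothing to either supremum).
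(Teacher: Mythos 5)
Your proposal is correct and uses essentially the same dilation $v(x)=u(\lambda x)$ with $\lambda=\|u\|_N$ and the same change-of-variables computation as the paper's proof. The only difference is cosmetic: you spell out the trivial direction and the scale-invariance of $\|\nabla\cdot\|_N$ explicitly, which the paper leaves implicit.
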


By Lemma \ref{normalized1}, we can always assume $\left\Vert u\right\Vert
_{N}=1$ in the sharp subcritical Trudinger-Moser inequality.

\begin{lemma}
\label{consequence1}The sharp subcritical Moser-Trudinger inequality is a
consequence of the sharp critical Moser-Trudinger inequality. More precisely,
if $MT_{a,b}\left(  \beta\right)  $ is finite, then $AT\left(  \alpha
,\beta\right)  $ is finite. Moreover,%
\begin{equation}
AT\left(  \alpha,\beta\right)  \leq\left(  \frac{\left(  \frac{\alpha}%
{\alpha_{N}}\right)  ^{\frac{N-1}{N}b}}{1-\left(  \frac{\alpha}{\alpha_{N}%
}\right)  ^{\frac{N-1}{N}a}}\right)  ^{\frac{N-\beta}{b}}MT_{a,b}\left(
\beta\right)  . \label{2.1}%
\end{equation}
In particular,
\[
AT\left(  \alpha,\beta\right)  \leq\left(  \frac{\left(  \frac{\alpha}%
{\alpha_{N}}\right)  ^{N-1}}{1-\left(  \frac{\alpha}{\alpha_{N}}\right)
^{N-1}}\right)  ^{1-\frac{\beta}{N}}MT\left(  \beta\right)  .
\]

\begin{proof}
Let $u\in W^{1,N}\left(
\mathbb{R}
^{N}\right)  :~\left\Vert \nabla u\right\Vert _{N}\leq1;~\left\Vert
u\right\Vert _{N}=1$. Set%
\begin{align*}
v\left(  x\right)   &  =\left(  \frac{\alpha}{\alpha_{N}}\right)  ^{\frac
{N-1}{N}}u\left(  \lambda x\right) \\
\lambda &  =\left(  \frac{\left(  \frac{\alpha}{\alpha_{N}}\right)
^{\frac{N-1}{N}b}}{1-\left(  \frac{\alpha}{\alpha_{N}}\right)  ^{\frac{N-1}%
{N}a}}\right)  ^{1/b}.
\end{align*}
then
\begin{align*}
\left\Vert \nabla v\right\Vert _{N}^{a}  &  =\left(  \frac{\alpha}{\alpha_{N}%
}\right)  ^{\frac{N-1}{N}a}\left\Vert \nabla u\right\Vert _{N}^{a}\leq\left(
\frac{\alpha}{\alpha_{N}}\right)  ^{\frac{N-1}{N}a}\\
\left\Vert v\right\Vert _{N}^{b}  &  =\left(  \frac{\alpha}{\alpha_{N}%
}\right)  ^{\frac{N-1}{N}b}\frac{1}{\lambda^{b}}\left\Vert u\right\Vert
_{N}^{b}=1-\left(  \frac{\alpha}{\alpha_{N}}\right)  ^{\frac{N-1}{N}a}.
\end{align*}
Hence $\left\Vert \nabla v\right\Vert _{N}^{a}+\left\Vert v\right\Vert
_{N}^{b}\leq1.$ By the definition of $MT_{a,b}\left(  \beta\right)  $, we have%
\begin{align*}
&  \int_{%
\mathbb{R}
^{N}}\phi_{N}\left(  \alpha(1-\frac{\beta}{N})\left\vert u\right\vert
^{N/(N-1)}\right)  \frac{dx}{\left\vert x\right\vert ^{\beta}}\\
&  =\int_{%
\mathbb{R}
^{N}}\phi_{N}\left(  \alpha(1-\frac{\beta}{N})\left\vert u\left(  \lambda
x\right)  \right\vert ^{N/(N-1)}\right)  \frac{d\left(  \lambda x\right)
}{\left\vert \lambda x\right\vert ^{\beta}}\\
&  =\lambda^{N-\beta}\int_{%
\mathbb{R}
^{N}}\phi_{N}\left(  \alpha_{N}(1-\frac{\beta}{N})\left\vert v\right\vert
^{N/(N-1)}\right)  \frac{dx}{\left\vert x\right\vert ^{\beta}}\\
&  \leq\left(  \frac{\left(  \frac{\alpha}{\alpha_{N}}\right)  ^{\frac{N-1}%
{N}b}}{1-\left(  \frac{\alpha}{\alpha_{N}}\right)  ^{\frac{N-1}{N}a}}\right)
^{\frac{N-\beta}{b}}MT_{a,b}\left(  \beta\right)  .
\end{align*}

\end{proof}
\end{lemma}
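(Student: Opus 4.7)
The plan is a straightforward scaling argument that converts a function admissible for $AT(\alpha,\beta)$ into one admissible for the critical inequality $MT_{a,b}(\beta)$. By Lemma \ref{normalized1} I may assume from the outset that $\|\nabla u\|_N \leq 1$ and $\|u\|_N = 1$, so only one free parameter needs to be absorbed by the rescaling. I would look for $v(x) = c\,u(\lambda x)$ with positive constants $c,\lambda$ chosen so that (i) the exponential level $\alpha$ on $u$ is matched to the critical level $\alpha_N$ on $v$, and (ii) the full-norm constraint $\|\nabla v\|_N^a + \|v\|_N^b \leq 1$ is met.

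Condition (i) forces $c = (\alpha/\alpha_N)^{(N-1)/N}$, since matching the arguments of $\phi_N$ requires $c^{N/(N-1)}\alpha_N = \alpha$. A direct computation gives $\|\nabla v\|_N = c\|\nabla u\|_N \leq c$ and $\|v\|_N = c\lambda^{-1}\|u\|_N = c\lambda^{-1}$, so (ii) reduces to $c^a + c^b\lambda^{-b} \leq 1$. Requiring equality and solving yields
\[
\lambda^b = \frac{(\alpha/\alpha_N)^{(N-1)b/N}}{1-(\alpha/\alpha_N)^{(N-1)a/N}},
\]
which is well-defined precisely because $\alpha<\alpha_N$, and this is exactly where subcriticality is used (without it the denominator would vanish or go negative).

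The estimate then follows from the change of variables $y=\lambda x$ in the integral defining $AT(\alpha,\beta)$: the weight $|x|^{-\beta}$ and the volume element together contribute a Jacobian factor of $\lambda^{\beta-N}$, giving
\[
\int_{\mathbb{R}^N}\phi_N\bigl(\alpha_N(1-\tfrac{\beta}{N})|v(x)|^{N/(N-1)}\bigr)\frac{dx}{|x|^\beta} = \lambda^{\beta-N}\int_{\mathbb{R}^N}\phi_N\bigl(\alpha(1-\tfrac{\beta}{N})|u(y)|^{N/(N-1)}\bigr)\frac{dy}{|y|^\beta}.
\]
Bounding the left-hand side by $MT_{a,b}(\beta)$ and multiplying through by $\lambda^{N-\beta} = (\lambda^b)^{(N-\beta)/b}$ produces exactly the claimed inequality \eqref{2.1}; the special case $a=b=N$ is then an immediate specialization.

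I do not anticipate a serious obstacle: the argument is essentially dimensional analysis, and the only subtle point is the simultaneous matching (i)--(ii). The one thing worth emphasizing in the writeup is that the reason the scheme works at all is that the exponent matching forces $c<1$ (because $\alpha<\alpha_N$), leaving a positive budget $1-c^a$ for the $L^N$ piece, and the scaling parameter $\lambda$ is precisely the lever that converts this budget into the stated bound.
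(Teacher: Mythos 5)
Your proposal is correct and follows essentially the same scaling argument as the paper: the choice $c=(\alpha/\alpha_N)^{(N-1)/N}$ to match the exponential levels and the choice of $\lambda$ to saturate the full-norm constraint are exactly the substitutions used there, and the change-of-variables bookkeeping (including the $\lambda^{\beta-N}$ Jacobian factor) is handled the same way. The only cosmetic difference is that you motivate the two choices as matching conditions (i)–(ii) rather than writing them down directly.
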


\begin{lemma}
\label{normalized2}%
\[
ATA\left(  \alpha,\beta\right)  =\sup_{\left\Vert \Delta u\right\Vert
_{\frac{N}{2}}\leq1;\left\Vert u\right\Vert _{\frac{N}{2}}=1}\int_{%
\mathbb{R}
^{N}}\frac{\phi_{N,2}\left(  \alpha\left(  1-\frac{\beta}{N}\right)
\left\vert u\right\vert ^{\frac{N}{N-2}}\right)  }{\left\vert x\right\vert
^{\beta}}dx.
\]

\begin{proof}
Let $u\in W^{2,\frac{N}{2}}\left(
\mathbb{R}
^{N}\right)  :\left\Vert \Delta u\right\Vert _{\frac{N}{2}}\leq1$ and set%
\begin{align*}
v\left(  x\right)   &  =u\left(  \lambda x\right)  ;\\
\lambda &  =\left\Vert u\right\Vert _{\frac{N}{2}}^{\frac{1}{2}}%
\end{align*}
Then it is easy to check that%
\[
\Delta v\left(  x\right)  =\lambda^{2}\Delta u\left(  \lambda x\right)
\]
and
\begin{align*}
\left\Vert \Delta v\right\Vert _{\frac{N}{2}}  &  =\left\Vert \Delta
u\right\Vert _{\frac{N}{2}};\\
\left\Vert v\right\Vert _{\frac{N}{2}}^{\frac{N}{2}}  &  =\int_{%
\mathbb{R}
^{N}}\left\vert v\left(  x\right)  \right\vert ^{\frac{N}{2}}dx=\int_{%
\mathbb{R}
^{N}}\left\vert u\left(  \lambda x\right)  \right\vert ^{\frac{N}{2}}%
dx=\frac{1}{\lambda^{N}}\int_{%
\mathbb{R}
^{N}}\left\vert u\left(  x\right)  \right\vert ^{\frac{N}{2}}dx=1.
\end{align*}
Moreover%
\begin{align*}
\int_{%
\mathbb{R}
^{N}}\frac{\phi_{N,2}\left(  \alpha\left(  1-\frac{\beta}{N}\right)
\left\vert v\right\vert ^{\frac{N}{N-2}}\right)  }{\left\vert x\right\vert
^{\beta}}dx  &  =\int_{%
\mathbb{R}
^{N}}\frac{\phi_{N,2}\left(  \alpha\left(  1-\frac{\beta}{N}\right)
\left\vert u\left(  \lambda x\right)  \right\vert ^{\frac{N}{N-2}}\right)
}{\left\vert x\right\vert ^{\beta}}dx\\
&  =\frac{1}{\lambda^{N-\beta}}\int_{%
\mathbb{R}
^{N}}\frac{\phi_{N,2}\left(  \alpha\left(  1-\frac{\beta}{N}\right)
\left\vert u\left(  x\right)  \right\vert ^{\frac{N}{N-2}}\right)
}{\left\vert x\right\vert ^{\beta}}dx\\
&  =\frac{1}{\left\Vert u\right\Vert _{\frac{N}{2}}^{\frac{N}{2}\left(
1-\frac{\beta}{N}\right)  }}\int_{%
\mathbb{R}
^{N}}\frac{\phi_{N,2}\left(  \alpha\left(  1-\frac{\beta}{N}\right)
\left\vert u\right\vert ^{\frac{N}{N-2}}\right)  }{\left\vert x\right\vert
^{\beta}}dx.
\end{align*}

\end{proof}
\end{lemma}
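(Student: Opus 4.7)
The plan is to reduce the general definition of $ATA(\alpha,\beta)$ (which involves a normalizing factor $\|u\|_{N/2}^{-(N/2)(1-\beta/N)}$) to the case $\|u\|_{N/2}=1$ by means of an appropriate dilation. Given any $u\in W^{2,N/2}(\mathbb{R}^N)$ with $\|\Delta u\|_{N/2}\le 1$, I would introduce $v(x):=u(\lambda x)$ and choose the dilation parameter $\lambda>0$ so that $\|v\|_{N/2}=1$, then show that both the remaining constraint $\|\Delta v\|_{N/2}\le 1$ persists and the integral transforms correctly.

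The two norm computations are standard. Since $\Delta v(x)=\lambda^2(\Delta u)(\lambda x)$, the change of variables $y=\lambda x$ gives
\[
\|\Delta v\|_{N/2}^{N/2}=\lambda^{N}\int_{\mathbb{R}^N}|\Delta u(\lambda x)|^{N/2}\,dx=\|\Delta u\|_{N/2}^{N/2},
\]
so this scaling is critical for the operator $\Delta$ at exponent $N/2$ and preserves the constraint. The $L^{N/2}$ norm of $v$ satisfies $\|v\|_{N/2}^{N/2}=\lambda^{-N}\|u\|_{N/2}^{N/2}$, which equals $1$ precisely when $\lambda=\|u\|_{N/2}^{1/2}$; this is the key choice.

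For the integral, the same change of variables, together with $|x|^{-\beta}=\lambda^{\beta}|y|^{-\beta}$ and $dx=\lambda^{-N}dy$, yields
\[
\int_{\mathbb{R}^N}\frac{\phi_{N,2}\!\left(\alpha(1-\tfrac{\beta}{N})|v|^{\frac{N}{N-2}}\right)}{|x|^\beta}\,dx
=\lambda^{\beta-N}\int_{\mathbb{R}^N}\frac{\phi_{N,2}\!\left(\alpha(1-\tfrac{\beta}{N})|u|^{\frac{N}{N-2}}\right)}{|y|^\beta}\,dy.
\]
With the above choice of $\lambda$, the prefactor $\lambda^{\beta-N}$ equals $\|u\|_{N/2}^{(\beta-N)/2}=\|u\|_{N/2}^{-(N/2)(1-\beta/N)}$, which is exactly the normalizing weight appearing in the definition of $ATA(\alpha,\beta)$. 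Hence the unrestricted supremum equals the restricted one, proving the identity. Conversely, any $v$ admissible for the restricted sup is automatically admissible for the unrestricted one (with $\|v\|_{N/2}=1$ giving trivial normalizer), so the reverse inequality is immediate.

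There is really no obstacle here beyond careful bookkeeping of the powers of $\lambda$: one must check that the scaling is simultaneously critical for the $L^{N/2}$ norm of $\Delta u$ (forcing the exponent $2$ in $\Delta v=\lambda^2\Delta u(\lambda\cdot)$ to match the norm exponent $N/2$ under the $N$-dimensional Jacobian), and also compatible with the singular weight $|x|^{-\beta}$ so that the final factor matches $\|u\|_{N/2}^{-(N/2)(1-\beta/N)}$. This is precisely why the normalizing factor in the definition of $ATA$ takes the specific form it does.
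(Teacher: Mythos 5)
Your proof is correct and follows essentially the same route as the paper's: the same dilation $v(x)=u(\lambda x)$ with $\lambda=\|u\|_{N/2}^{1/2}$, the same observations that this scaling is critical for $\|\Delta\cdot\|_{N/2}$ and normalizes $\|v\|_{N/2}$ to $1$, and the same change of variables producing the factor $\lambda^{\beta-N}=\|u\|_{N/2}^{-(N/2)(1-\beta/N)}$. The only addition is your explicit remark on the converse direction (admissible $v$ with $\|v\|_{N/2}=1$ already belongs to the larger class), which the paper leaves implicit.
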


\begin{lemma}
\label{consequence2}Assume $A_{a,b}\left(  \beta\right)  <\infty$, then
$ATA\left(  \alpha,\beta\right)  <\infty$. Moreover,
\begin{equation}
ATA\left(  \alpha,\beta\right)  \leq\left(  \frac{\left(  \frac{\alpha}%
{\beta\left(  N,2\right)  }\right)  ^{\frac{N-2}{N}b}}{1-\left(  \frac{\alpha
}{\beta\left(  N,2\right)  }\right)  ^{\frac{N-2}{N}a}}\right)  ^{\frac
{N-\beta}{2b}}A_{a,b}\left(  \beta\right)  . \label{2.2}%
\end{equation}
In particular, if $A\left(  \beta\right)  <\infty$, then
\[
ATA\left(  \alpha,\beta\right)  \leq\left(  \frac{\left(  \frac{\alpha}%
{\beta\left(  N,2\right)  }\right)  ^{\frac{N-2}{2}}}{1-\left(  \frac{\alpha
}{\beta\left(  N,2\right)  }\right)  ^{\frac{N-2}{2}}}\right)  ^{\frac
{N-\beta}{N}}A\left(  \beta\right)  .
\]

\begin{proof}
Let $u\in W^{2,\frac{N}{2}}\left(
\mathbb{R}
^{N}\right)  :\left\Vert \Delta u\right\Vert _{\frac{N}{2}}\leq1$ and
$\left\Vert u\right\Vert _{\frac{N}{2}}=1$. We define%
\begin{align*}
v\left(  x\right)   &  =\left(  \frac{\alpha}{\beta\left(  N,2\right)
}\right)  ^{\frac{N-2}{N}}u\left(  \lambda x\right) \\
\lambda &  =\left(  \frac{\left(  \frac{\alpha}{\beta\left(  N,2\right)
}\right)  ^{\frac{N-2}{N}b}}{1-\left(  \frac{\alpha}{\beta\left(  N,2\right)
}\right)  ^{\frac{N-2}{N}a}}\right)  ^{\frac{1}{2b}}.
\end{align*}
then
\begin{align*}
\left\Vert \Delta v\right\Vert _{\frac{N}{2}}  &  =\left(  \frac{\alpha}%
{\beta\left(  N,2\right)  }\right)  ^{\frac{N-2}{N}}\left\Vert \Delta
u\right\Vert _{\frac{N}{2}}\leq\left(  \frac{\alpha}{\beta\left(  N,2\right)
}\right)  ^{\frac{N-2}{N}}\\
\left\Vert v\right\Vert _{\frac{N}{2}}^{b}  &  =\left(  \frac{\alpha}%
{\beta\left(  N,2\right)  }\right)  ^{\frac{N-2}{N}b}\frac{1}{\lambda^{2b}%
}\left\Vert u\right\Vert _{\frac{N}{2}}^{b}=1-\left(  \frac{\alpha}%
{\beta\left(  N,2\right)  }\right)  ^{\frac{N-2}{N}a}.
\end{align*}
Hence $\left\Vert \Delta v\right\Vert _{\frac{N}{2}}^{a}+\left\Vert
v\right\Vert _{\frac{N}{2}}^{b}\leq1.$ By the definition of $A_{a,b}\left(
\beta\right)  $, we have%
\begin{align*}
&  \int_{%
\mathbb{R}
^{N}}\phi_{N,2}\left(  \alpha(1-\frac{\beta}{N})\left\vert u\right\vert
^{N/(N-2)}\right)  \frac{dx}{\left\vert x\right\vert ^{\beta}}\\
&  =\int_{%
\mathbb{R}
^{N}}\phi_{N,2}\left(  \alpha(1-\frac{\beta}{N})\left\vert u\left(  \lambda
x\right)  \right\vert ^{N/(N-2)}\right)  \frac{d\left(  \lambda x\right)
}{\left\vert \lambda x\right\vert ^{\beta}}\\
&  =\lambda^{N-\beta}\int_{%
\mathbb{R}
^{N}}\phi_{N}\left(  \alpha_{N}(1-\frac{\beta}{N})\left\vert v\right\vert
^{N/(N-2)}\right)  \frac{dx}{\left\vert x\right\vert ^{\beta}}\\
&  \leq\left(  \frac{\left(  \frac{\alpha}{\beta\left(  N,2\right)  }\right)
^{\frac{N-2}{N}b}}{1-\left(  \frac{\alpha}{\beta\left(  N,2\right)  }\right)
^{\frac{N-2}{N}a}}\right)  ^{\frac{N-\beta}{2b}}A_{a,b}\left(  \beta\right)  .
\end{align*}

\end{proof}
\end{lemma}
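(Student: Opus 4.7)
The plan is to follow the scaling argument of Lemma \ref{consequence1}, adapted to the second-order Adams setting: the exponent $N/(N-1)$ is replaced by $N/(N-2)$, the threshold $\alpha_N$ by $\beta(N,2)$, and the first-order rescaling of Lemma \ref{consequence1} by a second-order rescaling under which $\Delta v(x) = c\lambda^{2}\Delta u(\lambda x)$ plays the role previously played by $\nabla v(x) = c\lambda\,\nabla u(\lambda x)$. By Lemma \ref{normalized2}, it suffices to establish the bound on the admissible set $\{u\in W^{2,N/2}(\mathbb{R}^{N}):\ \left\Vert \Delta u\right\Vert _{N/2}\leq 1,\ \left\Vert u\right\Vert _{N/2}=1\}$.

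For such a $u$ I would introduce
\[
v(x)=\left(\frac{\alpha}{\beta(N,2)}\right)^{(N-2)/N} u(\lambda x),\qquad \lambda>0\text{ to be chosen},
\]
with the multiplicative constant $c:=(\alpha/\beta(N,2))^{(N-2)/N}$ chosen precisely so that $c^{N/(N-2)}=\alpha/\beta(N,2)$, which ensures the key identity
\[
\alpha\!\left(1-\tfrac{\beta}{N}\right)|u(\lambda x)|^{N/(N-2)}=\beta(N,2)\!\left(1-\tfrac{\beta}{N}\right)|v(x)|^{N/(N-2)}.
\]
A direct computation yields $\left\Vert \Delta v\right\Vert _{N/2}=c\,\left\Vert \Delta u\right\Vert _{N/2}\leq c$, independently of $\lambda$, and $\left\Vert v\right\Vert _{N/2}^{b}=c^{b}\lambda^{-2b}\left\Vert u\right\Vert _{N/2}^{b}=c^{b}/\lambda^{2b}$. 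Imposing $\left\Vert \Delta v\right\Vert _{N/2}^{a}+\left\Vert v\right\Vert _{N/2}^{b}\leq 1$ and saturating the inequality forces
\[
\lambda^{2b}=\frac{c^{b}}{1-c^{a}}=\frac{(\alpha/\beta(N,2))^{(N-2)b/N}}{1-(\alpha/\beta(N,2))^{(N-2)a/N}},
\]
which is well-defined exactly because $\alpha<\beta(N,2)$.

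With this $v$ at hand, I would perform the change of variables $y=\lambda x$ in the weighted integral defining $ATA(\alpha,\beta)$. Since $|x|^{-\beta}$ absorbs $\beta$ powers of $\lambda$ while $dx$ absorbs $N$ powers, the net Jacobian factor is $\lambda^{N-\beta}$, and the resulting integrand is exactly the one appearing in the definition of $A_{a,b}(\beta)$ evaluated at the admissible function $v$. The hypothesis $A_{a,b}(\beta)<\infty$ therefore bounds the integral by $\lambda^{N-\beta}A_{a,b}(\beta)$, and substitution of the chosen value of $\lambda$ produces exactly the estimate (\ref{2.2}); taking $a=b=N/2$ then yields the stated particular case.

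I expect no real obstacle here: the content is entirely the bookkeeping of scaling exponents, and the computation mirrors that of Lemma \ref{consequence1} term-by-term. The one point deserving genuine care is that, because the relevant derivative has order two, the scaling parameter $\lambda$ appears with the exponent $1/(2b)$ rather than the $1/b$ of Lemma \ref{consequence1}; once this is properly accounted for, both the normalization of $\left\Vert v\right\Vert _{N/2}$ and the final $\lambda^{N-\beta}$ factor fall into place and the identity drops out.
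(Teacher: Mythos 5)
Your proposal is correct and coincides with the paper's own argument: the same choice of $v(x)=(\alpha/\beta(N,2))^{(N-2)/N}u(\lambda x)$ with the same $\lambda$, the same verification that $\left\Vert\Delta v\right\Vert_{N/2}^{a}+\left\Vert v\right\Vert_{N/2}^{b}\leq 1$, and the same change of variables producing the $\lambda^{N-\beta}$ factor. The bookkeeping of the second-order scaling (the extra factor of $2$ in the exponent of $\lambda$) is handled exactly as in the paper.
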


\section{Asymptotic behavior of the supremums in subcritical Trudinger-Moser
inequalities and relationship with the critical supremums}

In this section, we will prove the improved sharp subcritical Trudinger-Moser
inequality. In particular, we will establish the asymptotic behavior for the
supremum $AT(\alpha, \beta)$ for the subcritical Trudinger-Moser inequality
(Theorem 1.1). We would like to note here that we don't assume the critical
$MT\left(  \beta\right)  <\infty$ in the proof of Theorem \ref{improvedAT}.
Moreover, we also establish the relationship between the supremums $AT(\alpha,
\beta)$ and $MT(\beta)$ of the critical and subcritical Trudinger-Moser
inequalities (Theorem 1.2).

\begin{proof}
[Proof of Theorem \ref{improvedAT}]Suppose that $u\in C_{0}^{\infty}\left(
\mathbb{R}
^{N}\right)  \setminus\left\{  0\right\}  $,$~u\geq0$, $\left\Vert \nabla
u\right\Vert _{N}\leq1$ and $\left\Vert u\right\Vert _{N}=1$. Let%
\[
\Omega=\left\{  x:u\left(  x\right)  >\left(  1-\left(  \frac{\alpha}%
{\alpha_{N}}\right)  ^{N-1}\right)  ^{\frac{1}{N}}\right\}  .
\]
Then the volume of $\Omega$ can be estimated as follows:
\[
\left\vert \Omega\right\vert =%
{\displaystyle\int\limits_{\Omega}}
1dx\leq%
{\displaystyle\int\limits_{\Omega}}
\frac{u\left(  x\right)  ^{N}}{1-\left(  \frac{\alpha}{\alpha_{N}}\right)
^{N-1}}dx\leq\frac{1}{1-\left(  \frac{\alpha}{\alpha_{N}}\right)  ^{N-1}}.
\]
We have%
\begin{align*}
&
{\displaystyle\int\limits_{\mathbb{R}^{N}\setminus\Omega}}
\frac{\phi_{N}\left(  \alpha\left(  1-\frac{\beta}{N}\right)  \left\vert
u\right\vert ^{N/(N-1)}\right)  }{\left\vert x\right\vert ^{\beta}}dx\\
&  \leq%
{\displaystyle\int\limits_{\left\{  u\leq1\right\}  }}
\frac{\phi_{N}\left(  \alpha\left\vert u\right\vert ^{N/(N-1)}\right)
}{\left\vert x\right\vert ^{\beta}}dx\\
&  \leq e^{\alpha}%
{\displaystyle\int\limits_{\left\{  u\leq1\right\}  }}
\frac{u^{N}}{\left\vert x\right\vert ^{\beta}}dx\\
&  \leq e^{\alpha}%
{\displaystyle\int\limits_{\left\{  u\leq1;\left\vert x\right\vert
\geq1\right\}  }}
\frac{u^{N}}{\left\vert x\right\vert ^{\beta}}dx+e^{\alpha}%
{\displaystyle\int\limits_{\left\{  u\leq1;\left\vert x\right\vert <1\right\}
}}
\frac{u^{N}}{\left\vert x\right\vert ^{\beta}}dx\\
&  \leq\frac{C\left(  N,\beta\right)  }{\left(  1-\left(  \frac{\alpha}%
{\alpha_{N}}\right)  ^{N-1}\right)  ^{\left(  N-\beta\right)  /N}}.
\end{align*}
Now, consider%
\begin{align*}
I  &  =%
{\displaystyle\int\limits_{\Omega}}
\frac{\phi_{N}\left(  \alpha\left(  1-\frac{\beta}{N}\right)  \left\vert
u\right\vert ^{N/(N-1)}\right)  }{\left\vert x\right\vert ^{\beta}}dx\\
&  \leq%
{\displaystyle\int\limits_{\Omega}}
\frac{\exp\left(  \alpha\left(  1-\frac{\beta}{N}\right)  \left\vert
u\right\vert ^{N/(N-1)}\right)  }{\left\vert x\right\vert ^{\beta}}dx.
\end{align*}
On $\Omega$, we set%
\[
v\left(  x\right)  =u\left(  x\right)  -\left(  1-\left(  \frac{\alpha}%
{\alpha_{N}}\right)  ^{N-1}\right)  ^{\frac{1}{N}}.
\]
Then it is clear that $v\in W_{0}^{1,N}\left(  \Omega\right)  $ and
$\left\Vert \nabla v\right\Vert _{N}\leq1.$ Also, on $\Omega$, with
$\varepsilon=\frac{\alpha_{N}}{\alpha}-1:$%
\begin{align*}
\left\vert u\right\vert ^{N/(N-1)}  &  \leq\left(  \left\vert v\right\vert
+\left(  1-\left(  \frac{\alpha}{\alpha_{N}}\right)  ^{N-1}\right)  ^{\frac
{1}{N}}\right)  ^{N/(N-1)}\\
&  \leq{(1+\varepsilon)|v|^{N/(N-1)}}+(1-\frac{1}{(1+\varepsilon)^{N-1}%
})^{\frac{1}{1-N}}|\left(  1-\left(  \frac{\alpha}{\alpha_{N}}\right)
^{N-1}\right)  ^{\frac{1}{N}}|^{N/(N-1)}\\
&  =\frac{\alpha_{N}}{\alpha}{|v|^{N/(N-1)}+1.}%
\end{align*}
Hence, by Moser-Trudinger inequality on bounded domains:%
\begin{align*}
I  &  \leq%
{\displaystyle\int\limits_{\Omega}}
\frac{\exp\left(  \alpha\left(  1-\frac{\beta}{N}\right)  \left\vert
u\right\vert ^{N/(N-1)}\right)  }{\left\vert x\right\vert ^{\beta}}dx\\
&  \leq%
{\displaystyle\int\limits_{\Omega}}
\frac{\exp\left(  \alpha_{N}\left(  1-\frac{\beta}{N}\right)  {|v|^{N/(N-1)}%
+\alpha}\right)  }{\left\vert x\right\vert ^{\beta}}dx\\
&  \leq C\left(  N,\beta\right)  \left\vert \Omega\right\vert ^{1-\frac{\beta
}{N}}\\
&  \leq\frac{C\left(  N,\beta\right)  }{\left(  1-\left(  \frac{\alpha}%
{\alpha_{N}}\right)  ^{N-1}\right)  ^{\left(  N-\beta\right)  /N}}.
\end{align*}
In conclusion, we have%
\[
AT\left(  \alpha,\beta\right)  \leq\frac{C\left(  N,\beta\right)  }{\left(
1-\left(  \frac{\alpha}{\alpha_{N}}\right)  ^{N-1}\right)  ^{\left(
N-\beta\right)  /N}}.
\]
Next, we will show that $AT\left(  \alpha_{N},\beta\right)  =\infty.$ Indeed,
consider the following sequence:%
\[
u_{n}(x)=\left\{
\begin{array}
[c]{c}%
0\text{ if }\left\vert x\right\vert \geq1,\\
\left(  \frac{N-\beta}{\omega_{N-1}n}\right)  ^{1/N}\log\left(  \frac
{1}{\left\vert x\right\vert }\right)  \text{ if }e^{-\frac{n}{N-\beta}%
}<\left\vert x\right\vert <1\\
\left(  \frac{1}{\omega_{N-1}}\right)  ^{\frac{1}{N}}\left(  \frac{n}{N-\beta
}\right)  ^{\frac{N-1}{N}}\text{ if }0\leq\left\vert x\right\vert \leq
e^{-\frac{n}{N-\beta}}%
\end{array}
\right.  .
\]
Then we can see easily that%
\[
\left\Vert \nabla u_{n}\right\Vert _{N}=1;~\left\Vert u_{n}\right\Vert
_{N}=o_{n}(1).
\]
However%
\begin{align*}
&
{\displaystyle\int\limits_{\mathbb{R}^{N}}}
\frac{\phi_{N}\left(  \alpha_{N}\left(  1-\frac{\beta}{N}\right)  \left\vert
u_{n}\right\vert ^{N/(N-1)}\right)  }{\left\vert x\right\vert ^{\beta}}dx\\
&  \geq%
{\displaystyle\int\limits_{\left\{  0\leq\left\vert x\right\vert \leq
e^{-\frac{n}{N-\beta}}\right\}  }}
\frac{\phi_{N}\left(  n\right)  }{\left\vert x\right\vert ^{\beta}}dx\\
&  =\omega_{N-1}\phi_{N}\left(  n\right)
{\displaystyle\int\limits_{0}^{e^{-\frac{n}{N-\beta}}}}
r^{N-1-\beta}dr\\
&  =\frac{\omega_{N-1}\phi_{N}\left(  n\right)  }{e^{n}\left(  N-\beta\right)
}\rightarrow\frac{\omega_{N-1}}{N-\beta}\text{ as }n\rightarrow\infty.
\end{align*}

Now, it is clear that there exists a large constant $M_{1}$, such that when
$n\geq M_{1}$,
\begin{align*}
\Vert u_{n}\Vert_{N}^{N}  &  =\int_{0}^{e^{-\frac{n}{N-\beta}}}({\frac
{1}{\omega_{N-1}}})^{N/N}({\frac{n}{N-\beta}})^{\frac{N(N-1)}{N}}%
r^{N-1}dr+\int_{e^{-\frac{n}{N-\beta}}}^{1}({\frac{N-\beta}{\omega_{N-1}n}%
})^{N/N}(\log{({\frac{1}{r}})})^{N}r^{N-1}dr\\
&  \approx n^{N-1}\int_{0}^{e^{-\frac{n}{N-\beta}}}r^{N-1}dr+{\frac{1}{n}}%
\int_{0}^{\frac{n}{N-\beta}}y^{N}e^{-Ny}dy\\
&  \approx n^{N}e^{-{\frac{nN}{N-\beta}}}+{\frac{1}{n}}\approx{\frac{1}{n}}%
\end{align*}
So
\[
\Vert u_{n}\Vert_{N}^{N-\beta}\approx\frac{1}{n^{N-\beta}}\quad\text{when}%
\quad n\geq M_{1}.
\]
Now we consider the following integral
\begin{align*}
&  \int_{\mathbb{R}^{N}}{\frac{\phi_{N}(\alpha(1-\beta/N)|u_{n}|^{\frac
{N}{N-1}})}{|x|^{\beta}}}dx\\
&  \gtrsim\int_{0}^{e^{-\frac{n}{N-\beta}}}\phi_{N}{\left(  \alpha
(1-\beta/N)({\frac{1}{\omega_{N-1}}})^{\frac{1}{N-1}}({\frac{n}{N-\beta}%
})\right)  }r^{N-1-\beta}dr\\
&  \gtrsim\int_{0}^{e^{-\frac{n}{N-\beta}}}\phi_{N}{{\left(  {\frac{\alpha
}{\alpha_{N}}}n\right)  }}r^{N-1-\beta}dr\gtrsim\phi_{N}{{\left(
{\frac{\alpha}{\alpha_{N}}}n\right)  e^{-n}}}%
\end{align*}
We note that there exists a large constant $M_{2}$ independent of $\alpha$
such that for $n\geq M_{2}$
\[
\phi_{N}{\left(  {\frac{\alpha}{\alpha_{N}}}n\right)  }\approx e^{({\frac
{\alpha}{\alpha_{N}}})n}%
\]
as long as ${\frac{\alpha}{\alpha_{N}}}\geq{\frac{1}{2}}$.\newline Now we
have
\begin{align*}
&  \int_{\mathbb{R}^{N}}{\frac{\phi_{N}(\alpha(1-\beta/N)|u|^{\frac{N}{N-1}}%
)}{|x|^{\beta}}}dx\\
&  \gtrsim e^{\left(  {\frac{\alpha}{\alpha_{N}}}n\right)  }e^{-n}%
=e^{-(1-{\frac{\alpha}{\alpha_{N}}})n}%
\end{align*}
Now for $\alpha$ that is close enough to $\alpha_{N}$ we can pick $n$ such
that $1\leq(1-{\frac{\alpha}{\alpha_{N}}})n\leq2$, i.e
\[
\alpha\approx(1-{\frac{1}{n}})\alpha_{N}\geq\left(  1-{\frac{1}{\max
{(M_{1},M_{2})}}}\right)  \alpha_{N}%
\]
or
\[
\max{(M_{1},M_{2})}\leq n\approx\frac{1}{1-{\frac{\alpha}{\alpha_{N}}}},
\]
Then
\begin{align}
&  \frac{1}{\Vert u_{n}\Vert_{N}^{N-\beta}}\int_{\mathbb{R}^{N}}{\frac
{\Phi_{N}(\alpha(1-\beta/N)|u_{n}|^{\frac{N}{N-1}})}{|x|^{\beta}}%
}dx\nonumber\\
&  \gtrsim n^{N-\beta}e^{-2}\nonumber\\
&  \approx\left(  \frac{1}{1-{\frac{\alpha}{\alpha_{N}}}}\right)  ^{N-\beta}
\label{lowerbound}%
\end{align}
And note that when $\alpha$ is close enough to $\alpha_{N}$, we have
\[
\frac{1-({\frac{\alpha}{\alpha_{N}}})^{N-1}}{1-{\frac{\alpha}{\alpha_{N}}}%
}\approx1,
\]
which implies
\[
AT\left(  \alpha,\beta\right)  \geq\frac{c\left(  N,\beta\right)  }{\left(
1-\left(  \frac{\alpha}{\alpha_{N}}\right)  ^{N-1}\right)  ^{\left(
N-\beta\right)  /N}}%
\]
when $\alpha$ is close enough to $\alpha_{N}.$
\end{proof}

Now, we will provide a proof of the sharp critical Trudinger-Moser inequality,
namely Theorem \ref{MT}, using the above improved sharp subcritical
Trudinger-Moser inequality (\ref{1.3.1}). This suggests a new approach to and
another look at the study of the sharp Trudinger-Moser inequality:

\begin{proof}
[Proof of Theorem \ref{MT}]First assume that $b\leq N.$ Let $u\in
W^{1,N}\left(
\mathbb{R}
^{N}\right)  \setminus\left\{  0\right\}  :\left\Vert \nabla u\right\Vert
_{N}^{a}+\left\Vert u\right\Vert _{N}^{b}\leq1.$ Assume that%
\[
\left\Vert \nabla u\right\Vert _{N}=\theta\in\left(  0,1\right)  ;~\left\Vert
u\right\Vert _{N}^{b}\leq1-\theta^{a}.
\]
If $\frac{1}{2}<\theta<1$, then we set%
\begin{align*}
v\left(  x\right)   &  =\frac{u\left(  \lambda x\right)  }{\theta}\\
\lambda &  =\frac{\left(  1-\theta^{a}\right)  ^{\frac{1}{b}}}{\theta}>0.
\end{align*}
Hence%
\begin{align*}
\left\Vert \nabla v\right\Vert _{N}  &  =\frac{\left\Vert \nabla u\right\Vert
_{N}}{\theta}=1;\\
\left\Vert v\right\Vert _{N}^{N}  &  =\int_{%
\mathbb{R}
^{N}}\left\vert v\right\vert ^{N}dx=\frac{1}{\theta^{N}}\int_{%
\mathbb{R}
^{N}}\left\vert u\left(  \lambda x\right)  \right\vert ^{N}dx=\frac{1}%
{\theta^{N}\lambda^{N}}\left\Vert u\right\Vert _{N}^{N}\leq\frac{\left(
1-\theta^{a}\right)  ^{\frac{N}{b}}}{\theta^{N}\lambda^{N}}=1.
\end{align*}
By Theorem \ref{improvedAT}, we get%
\begin{align*}
&  \int_{%
\mathbb{R}
^{N}}\frac{\phi_{N}\left(  \alpha_{N}\left(  1-\frac{\beta}{N}\right)
\left\vert u\right\vert ^{\frac{N}{N-1}}\right)  }{\left\vert x\right\vert
^{\beta}}dx=\int_{%
\mathbb{R}
^{N}}\frac{\phi_{N}\left(  \alpha_{N}\left(  1-\frac{\beta}{N}\right)
\left\vert u\left(  \lambda x\right)  \right\vert ^{\frac{N}{N-1}}\right)
}{\left\vert \lambda x\right\vert ^{\beta}}d\left(  \lambda x\right) \\
&  \leq\lambda^{N-\beta}\int_{%
\mathbb{R}
^{N}}\frac{\phi_{N}\left(  \theta^{\frac{N}{N-1}}\alpha_{N}(1-\frac{\beta}%
{N})\left\vert v\right\vert ^{N/(N-1)}\right)  }{\left\vert x\right\vert
^{\beta}}dx\\
&  \leq\lambda^{N-\beta}AT\left(  \theta^{\frac{N}{N-1}}\alpha_{N}%
,\beta\right)  \leq\left(  \frac{\left(  1-\theta^{a}\right)  ^{\frac{N}{b}}%
}{\theta^{N}}\right)  ^{1-\frac{\beta}{N}}\frac{C\left(  N,\beta\right)
}{\left(  1-\left(  \frac{\theta^{\frac{N}{N-1}}\alpha_{N}}{\alpha_{N}%
}\right)  ^{N-1}\right)  ^{1-\frac{\beta}{N}}}\\
&  \leq\frac{\left(  \left(  1-\theta^{a}\right)  ^{\frac{N}{b}}\right)
^{1-\frac{\beta}{N}}}{\left(  1-\theta^{N}\right)  ^{1-\frac{\beta}{N}}%
}C\left(  N,\beta\right)  \leq C\left(  N,\beta,a,b\right)  \text{ since
}b\leq N\text{.}%
\end{align*}
If $0<\theta\leq\frac{1}{2}$, then with
\[
v\left(  x\right)  =2u\left(  2x\right)  ,
\]
we have%
\begin{align*}
\left\Vert \nabla v\right\Vert _{N}  &  =2\left\Vert \nabla u\right\Vert
_{N}\leq1\\
\left\Vert v\right\Vert _{N}  &  \leq1.
\end{align*}
By Theorem \ref{improvedAT}:%
\begin{align*}
&  \int_{%
\mathbb{R}
^{N}}\frac{\phi_{N}\left(  \alpha_{N}\left(  1-\frac{\beta}{N}\right)
\left\vert u\right\vert ^{\frac{N}{N-1}}\right)  }{\left\vert x\right\vert
^{\beta}}dx\leq2^{N}\int_{%
\mathbb{R}
^{N}}\frac{\phi_{N}\left(  \frac{\alpha_{N}(1-\frac{\beta}{N})}{2^{\frac
{N}{N-1}}}\left\vert v\right\vert ^{N/(N-1)}\right)  }{\left\vert x\right\vert
^{\beta}}dx\\
&  \leq C\left(  N,\beta\right)  .
\end{align*}
Next, we will verify that the constant $\alpha_{N}(1-\frac{\beta}{N})$ is our
best possible. Indeed, we choose the sequence $\left\{  u_{k}\right\}  $ as
follows
\begin{equation}
u_{n}(x)=\left\{
\begin{array}
[c]{c}%
0\text{ if }\left\vert x\right\vert \geq1,\\
\left(  \frac{N-\beta}{\omega_{N-1}n}\right)  ^{1/N}\log\left(  \frac
{1}{\left\vert x\right\vert }\right)  \text{ if }e^{-\frac{n}{N-\beta}%
}<\left\vert x\right\vert <1\\
\left(  \frac{1}{\omega_{N-1}}\right)  ^{\frac{1}{N}}\left(  \frac{n}{N-\beta
}\right)  ^{\frac{N-1}{N}}\text{ if }0\leq\left\vert x\right\vert \leq
e^{-\frac{n}{N-\beta}}%
\end{array}
\right.  . \label{3.1}%
\end{equation}
Then,%
\[
\left\Vert \nabla u_{n}\right\Vert _{N}=1;~\left\Vert u_{n}\right\Vert
_{N}=O(\frac{1}{n^{\frac{1}{N}}}).
\]
Set
\begin{align*}
~w_{n}(x)  &  =\lambda_{n}u_{n}\left(  x\right)  \text{ where }\lambda_{n}%
\in\left(  0,1\right)  \text{ is a solution of }\lambda_{n}^{a}+\lambda
_{n}^{b}\left\Vert u_{n}\right\Vert _{N}^{b}=1.\\
\lambda_{n}  &  =1-O\left(  \frac{1}{n^{\frac{b}{aN}}}\right)  \rightarrow
_{k\rightarrow\infty}1.
\end{align*}
Then
\[
\left\Vert \nabla w_{n}\right\Vert _{N}^{a}+\left\Vert w_{n}\right\Vert
_{N}^{b}=1.
\]
Also, for $\alpha>\alpha_{N}:$%
\begin{align*}
&  \int_{%
\mathbb{R}
^{N}}\frac{\phi_{N}\left(  \alpha\left(  1-\frac{\beta}{N}\right)  \left\vert
w_{n}\right\vert ^{\frac{N}{N-1}}\right)  }{\left\vert x\right\vert ^{\beta}%
}dx\\
&  \geq%
{\displaystyle\int\limits_{\left\{  0\leq\left\vert x\right\vert \leq
e^{-\frac{n}{N-\beta}}\right\}  }}
\frac{\exp\left(  \alpha\left(  1-\frac{\beta}{N}\right)  \left\vert
w_{n}\right\vert ^{\frac{N}{N-1}}\right)  -%
{\displaystyle\sum\limits_{j=0}^{N-2}}
\frac{\left[  \alpha\left(  1-\frac{\beta}{N}\right)  \right]  ^{j}}%
{j!}\left\vert w_{n}\right\vert ^{\frac{N}{N-1}j}}{\left\vert x\right\vert
^{\beta}}dx\\
&  \geq\left[  \exp\left(  \frac{\alpha n\left(  1-O\left(  \frac{1}%
{n^{\frac{b}{a\left(  N-1\right)  }}}\right)  \right)  }{\alpha_{N}}\right)
-O\left(  k^{N-1}\right)  \right]  \frac{\omega_{N-1}\exp\left(  -n\right)
}{N-\beta}\\
&  \rightarrow\infty\text{ as }n\rightarrow\infty\text{.}%
\end{align*}
Now, we will show that%
\[
MT_{a,b}\left(  \beta\right)  =\sup_{\alpha\in\left(  0,\alpha_{N}\right)
}\left(  \frac{1-\left(  \frac{\alpha}{\alpha_{N}}\right)  ^{\frac{N-1}{N}a}%
}{\left(  \frac{\alpha}{\alpha_{N}}\right)  ^{\frac{N-1}{N}b}}\right)
^{\frac{N-\beta}{b}}AT\left(  \alpha,\beta\right)
\]
when $MT_{a,b}\left(  \beta\right)  <\infty$. Indeed, by (\ref{2.1}), we have%
\[
\sup_{\alpha\in\left(  0,\alpha_{N}\right)  }\left(  \frac{1-\left(
\frac{\alpha}{\alpha_{N}}\right)  ^{\frac{N-1}{N}a}}{\left(  \frac{\alpha
}{\alpha_{N}}\right)  ^{\frac{N-1}{N}b}}\right)  ^{\frac{N-\beta}{b}}AT\left(
\alpha,\beta\right)  \leq MT_{a,b}\left(  \beta\right)  .
\]
Now, let $\left(  u_{n}\right)  $ be the maximizing sequence of $MT_{a,b}%
\left(  \beta\right)  $, i.e., $u_{n}\in W^{1,N}\left(
\mathbb{R}
^{N}\right)  \setminus\left\{  0\right\}  :\left\Vert \nabla u_{n}\right\Vert
_{N}^{a}+\left\Vert u_{n}\right\Vert _{N}^{b}\leq1$ and
\[
\int_{%
\mathbb{R}
^{N}}\phi_{N}\left(  \alpha_{N}\left(  1-\frac{\beta}{N}\right)  \left\vert
u_{n}\right\vert ^{\frac{N}{N-1}}\right)  \frac{dx}{\left\vert x\right\vert
^{\beta}}\rightarrow_{n\rightarrow\infty}MT_{a,b}\left(  \beta\right)  .
\]
We define%
\begin{align*}
v_{n}\left(  x\right)   &  =\frac{u\left(  \lambda_{n}x\right)  }{\left\Vert
\nabla u_{n}\right\Vert _{N}}\\
\lambda_{n}  &  =\left(  \frac{1-\left\Vert \nabla u_{n}\right\Vert _{N}^{a}%
}{\left\Vert \nabla u_{n}\right\Vert _{N}^{b}}\right)  ^{1/b}>0.
\end{align*}
Hence%
\[
\left\Vert \nabla v_{n}\right\Vert _{N}=1\text{ and }\left\Vert v_{n}%
\right\Vert _{N}\leq1.
\]
Also,
\begin{align*}
&  \int_{%
\mathbb{R}
^{N}}\phi_{N}\left(  \alpha_{N}\left(  1-\frac{\beta}{N}\right)  \left\vert
u_{n}\right\vert ^{\frac{N}{N-1}}\right)  \frac{dx}{\left\vert x\right\vert
^{\beta}}\\
&  =\lambda_{n}^{N-\beta}\int_{%
\mathbb{R}
^{N}}\frac{\phi_{N}\left(  \left\Vert \nabla u_{n}\right\Vert _{N}^{\frac
{N}{N-1}}\alpha_{N}(1-\frac{\beta}{N})\left\vert v_{n}\right\vert
^{N/(N-1)}\right)  }{\left\vert x\right\vert ^{\beta}}dx\\
&  \leq\lambda_{n}^{N-\beta}AT\left(  \left\Vert \nabla u_{n}\right\Vert
_{N}^{\frac{N}{N-1}}\alpha_{N},\beta\right)  \leq\sup_{\alpha\in\left(
0,\alpha_{N}\right)  }\left(  \frac{1-\left(  \frac{\alpha}{\alpha_{N}%
}\right)  ^{\frac{N-1}{N}a}}{\left(  \frac{\alpha}{\alpha_{N}}\right)
^{\frac{N-1}{N}b}}\right)  ^{\frac{N-\beta}{b}}AT\left(  \alpha,\beta\right)
.
\end{align*}
Hence, we receive
\[
MT_{a,b}\left(  \beta\right)  =\sup_{\alpha\in\left(  0,\alpha_{N}\right)
}\left(  \frac{1-\left(  \frac{\alpha}{\alpha_{N}}\right)  ^{\frac{N-1}{N}a}%
}{\left(  \frac{\alpha}{\alpha_{N}}\right)  ^{\frac{N-1}{N}b}}\right)
^{\frac{N-\beta}{b}}AT\left(  \alpha,\beta\right)
\]
when $MT_{a,b}\left(  \beta\right)  <\infty.$

Now, if there exists some $b>N$ such that $MT_{a,b}\left(  \beta\right)
<\infty$. Then we have
\[
\overline{\lim}_{\alpha\rightarrow\alpha_{N}^{-}}\left(  1-\left(
\frac{\alpha}{\alpha_{N}}\right)  ^{\frac{N-1}{N}a}\right)  ^{\frac{N-\beta
}{b}}AT\left(  \alpha,\beta\right)  <\infty\text{.}%
\]
Also, since $MT\left(  \beta\right)  <\infty:$%
\[
\overline{\lim}_{\alpha\rightarrow\alpha_{N}^{-}}\left(  1-\left(
\frac{\alpha}{\alpha_{N}}\right)  ^{N-1}\right)  ^{\frac{N-\beta}{N}}AT\left(
\alpha,\beta\right)  <\infty\text{.}%
\]
By Theorem \ref{improvedAT}, we can show that
\begin{equation}
\overline{\lim}_{\alpha\rightarrow\alpha_{N}^{-}}\left(  1-\left(
\frac{\alpha}{\alpha_{N}}\right)  ^{N-1}\right)  ^{\frac{N-\beta}{N}}AT\left(
\alpha,\beta\right)  >0. \label{3.2}%
\end{equation}
Hence%
\[
\underline{\lim}_{\alpha\rightarrow\alpha_{N}^{-}}\frac{\left(  1-\left(
\frac{\alpha}{\alpha_{N}}\right)  ^{\frac{N-1}{N}a}\right)  ^{\frac{N-\beta
}{b}}}{\left(  1-\left(  \frac{\alpha}{\alpha_{N}}\right)  ^{N-1}\right)
^{\frac{N-\beta}{N}}}<\infty
\]
which is impossible since $b>N$. The proof is now completed.
\end{proof}

\section{Asymptotic behavior of subcritical Adams inequalities and
relationship with the critical ones}

\subsection{Sharp Adams inequalities on $W^{2,\frac{N}{2}}\left(
\mathbb{R}
^{N}\right)  $}

In this subsection, we establish the asymptotic behavior of the supremums in
the subcritical Adams inequalities, namely Theorem \ref{ATA}. Again, it is
worthy noticing that no version of Theorem \ref{A} is assumed in order to
prove Theorem \ref{ATA}. Moreover, we also establish the relationship between
the supremums for the critical and subcritical Adams inequalities (Theorem 1.4).

\begin{proof}
[Proof of Theorem \ref{ATA}]Let $u\in C_{0}^{\infty}\left(
\mathbb{R}
^{N}\right)  {\setminus\{0\}}$, $u\geq{0}$, $\left\Vert \Delta u\right\Vert
_{\frac{N}{2}}\leq1$ and $\left\Vert u\right\Vert _{\frac{N}{2}}=1.$ Set
\[
\Omega(u)=\left\{  x\in%
\mathbb{R}
^{n}:u(x)>\left[  1-\left(  \frac{\alpha}{\beta\left(  N,2\right)  }\right)
^{\frac{N-2}{2}}\right]  ^{\frac{2}{N}}\right\}  .
\]
Since $u\in C_{0}^{\infty}\left(
\mathbb{R}
^{n}\right)  $, we have that $\Omega(u)$ is a bounded set. Moreover, we have
\[
\left\vert {\Omega(u)}\right\vert \leq{\int_{\Omega(u)}}\frac{{|u|^{\frac
{N}{2}}}}{1-\left(  \frac{\alpha}{\beta\left(  N,2\right)  }\right)
^{\frac{N-2}{2}}}dx\leq\frac{1}{1-\left(  \frac{\alpha}{\beta\left(
N,2\right)  }\right)  ^{\frac{N-2}{2}}}.
\]
Now, consider%
\begin{align*}
I &  =%
{\displaystyle\int\limits_{{\Omega(u)}}}
\frac{\phi_{N,2}\left(  \alpha\left(  1-\frac{\beta}{N}\right)  \left\vert
u\right\vert ^{N/(N-2)}\right)  }{\left\vert x\right\vert ^{\beta}}dx\\
&  \leq%
{\displaystyle\int\limits_{{\Omega(u)}}}
\frac{\exp\left(  \alpha\left(  1-\frac{\beta}{N}\right)  \left\vert
u\right\vert ^{N/(N-2)}\right)  }{\left\vert x\right\vert ^{\beta}}dx.
\end{align*}
On ${\Omega(u)}$, we set%
\[
v\left(  x\right)  =u\left(  x\right)  -\left[  1-\left(  \frac{\alpha}%
{\beta\left(  N,2\right)  }\right)  ^{\frac{N-2}{2}}\right]  ^{\frac{2}{N}}.
\]
Then it is clear that $v\in W_{N}^{2,{\frac{N}{2}}}\left(  {\Omega(u)}\right)
$ and $\left\Vert \Delta v\right\Vert _{\frac{N}{2}}\leq1.$ Also, on
${\Omega(u)}$, with $\varepsilon=\frac{\beta\left(  N,2\right)  }{\alpha}-1:$%
\begin{align*}
\left\vert u\right\vert ^{N/(N-2)} &  \leq\left(  \left\vert v\right\vert
+\left(  1-\left(  \frac{\alpha}{\beta\left(  N,2\right)  }\right)
^{\frac{N-2}{2}}\right)  ^{\frac{2}{N}}\right)  ^{N/(N-2)}\\
&  \leq{(1+\varepsilon)|v|^{N/(N-2)}}+(1-\frac{1}{(1+\varepsilon)^{\frac
{N-2}{2}}})^{\frac{2}{2-N}}|\left(  1-\left(  \frac{\alpha}{\beta\left(
N,2\right)  }\right)  ^{\frac{N-2}{2}}\right)  ^{\frac{2}{N}}|^{N/(N-2)}\\
&  =\frac{\beta\left(  N,2\right)  }{\alpha}{|v|^{N/(N-2)}+1.}%
\end{align*}
Hence, by Adams inequality on bounded domains (Theorem D):%
\begin{align*}
I &  \leq%
{\displaystyle\int\limits_{{\Omega(u)}}}
\frac{\exp\left(  \alpha\left(  1-\frac{\beta}{N}\right)  \left\vert
u\right\vert ^{N/(N-2)}\right)  }{\left\vert x\right\vert ^{\beta}}dx\\
&  \leq%
{\displaystyle\int\limits_{{\Omega(u)}}}
\frac{\exp\left(  \beta\left(  N,2\right)  \left(  1-\frac{\beta}{N}\right)
{|v|^{N/(N-2)}+\alpha}\right)  }{\left\vert x\right\vert ^{\beta}}dx\\
&  \leq C\left(  N,\beta\right)  \left\vert {\Omega(u)}\right\vert
^{1-\frac{\beta}{N}}\\
&  \leq C\left(  N,\beta\right)  \left(  \frac{1}{1-\left(  \frac{\alpha
}{\beta\left(  N,2\right)  }\right)  ^{\frac{N-2}{2}}}\right)  ^{1-\frac
{\beta}{N}}.
\end{align*}

We also have the following estimate:%
\begin{align*}
&
{\displaystyle\int\limits_{\mathbb{R}^{N}\setminus{\Omega(u)}}}
\phi_{N,2}\left(  \alpha(1-\frac{\beta}{N})\left\vert u\right\vert
^{N/(N-2)}\right)  \frac{dx}{\left\vert x\right\vert ^{\beta}}\\
&  \leq%
{\displaystyle\int\limits_{\left\{  u\leq1\right\}  }}
\phi_{N,2}\left(  \alpha(1-\frac{\beta}{N})\left\vert u\right\vert
^{N/(N-2)}\right)  \frac{dx}{\left\vert x\right\vert ^{\beta}}\\
&  \leq C\left(  N\right)
{\displaystyle\int\limits_{\left\{  u\leq1\right\}  }}
\frac{\left\vert u\right\vert ^{\frac{N}{2}}}{\left\vert x\right\vert ^{\beta
}}dx\\
&  \leq C\left(  N\right)  \left(
{\displaystyle\int\limits_{\left\{  u\leq1;\left\vert x\right\vert
\geq1\right\}  }}
\frac{\left\vert u\right\vert ^{\frac{N}{2}}}{\left\vert x\right\vert ^{\beta
}}dx+%
{\displaystyle\int\limits_{\left\{  u\leq1;\left\vert x\right\vert <1\right\}
}}
\frac{\left\vert u\right\vert ^{\frac{N}{2}}}{\left\vert x\right\vert ^{\beta
}}dx\right)  \\
&  \leq C\left(  N,\beta\right)  .
\end{align*}
In conclusion, we have%
\[
ATA\left(  \alpha,\beta\right)  \leq\frac{C\left(  N,\beta\right)  }{\left[
1-\left(  \frac{\alpha}{\beta\left(  N,2\right)  }\right)  ^{\frac{N-2}{2}%
}\right]  ^{1-\frac{\beta}{N}}}.
\]
We now show that $ATA\left(  \beta\left(  N,2\right)  ,\beta\right)  =\infty.$
Indeed, let $\psi\in C^{\infty}\left(  \left[  0,1\right]  \right)  $ be such
that
\[
\psi\left(  0\right)  =\psi^{\prime}\left(  0\right)  =0;\ \psi\left(
1\right)  =\psi^{\prime}\left(  1\right)  =1.
\]
For $0<\varepsilon<\frac{1}{2}$ we set%
\[
H\left(  t\right)  =\left\{
\begin{array}
[c]{c}%
\varepsilon\psi\left(  \frac{t}{\varepsilon}\right)  \text{
\ \ \ \ \ \ \ \ \ \ \ \ \ \ \ \ \ \ \ \ }0<t\leq\varepsilon\\
t\text{ ~\ \ \ \ \ \ \ \ \ \ \ \ \ \ \ \ \ \ \ \ \ \ }\varepsilon
<t\leq1-\varepsilon\\
1-\varepsilon\psi\left(  \frac{1-t}{\varepsilon}\right)
~\ \ \ \ \ 1-\varepsilon<t\leq1\\
0\text{
\ \ \ \ \ \ \ \ \ \ \ \ \ \ \ \ \ \ \ \ \ \ \ \ \ \ \ \ \ \ \ \ \ \ \ \ }1<t
\end{array}
\right.
\]
and consider Adams' test functions%
\[
\psi_{r}\left(  \left\vert x\right\vert \right)  =H\left(  \frac{\log\frac
{1}{\left\vert x\right\vert }}{\log\frac{1}{r}}\right)  .
\]
By construction, $\psi_{r}\in W^{2,\frac{N}{2}}\left(
\mathbb{R}
^{N}\right)  $ and $\psi_{r}\left(  \left\vert x\right\vert \right)  =1$ for
$x\in B_{r}.$ Moreover, by \cite{A}:%
\begin{align*}
\left\Vert \Delta\psi_{r}\right\Vert _{\frac{N}{2}}^{\frac{N}{2}} &
\leq\omega_{N-1}a\left(  N,2\right)  ^{\frac{N}{2}}\log\left(  \frac{1}%
{r}\right)  ^{1-\frac{N}{2}}A_{r};\\
\left\Vert \psi_{r}\right\Vert _{\frac{N}{2}}^{\frac{N}{2}} &  =o\left(
\left(  \frac{1}{\log\left(  \frac{1}{r}\right)  }\right)  ^{\frac{N-2}{2}%
}\right)  \\
a\left(  N,2\right)   &  =\frac{\beta\left(  N,2\right)  ^{\frac{N-2}{N}}%
}{N\sigma_{N}^{\frac{2}{N}}};\\
A_{r} &  =A_{r}\left(  N,2\right)  =\left[  1+2\varepsilon\left(  \left\Vert
\psi^{\prime}\right\Vert _{\infty}+O\left(  \frac{1}{\log\left(  \frac{1}%
{r}\right)  }\right)  \right)  ^{\frac{N}{2}}\right]  ;
\end{align*}
Now, we set%
\[
u_{r}\left(  \left\vert x\right\vert \right)  =\left(  \log\left(  \frac{1}%
{r}\right)  \right)  ^{\frac{N-2}{N}}\psi_{r}\left(  \left\vert x\right\vert
\right)  .
\]
Then%
\begin{align*}
u_{r}\left(  \left\vert x\right\vert \right)   &  =\left(  \log\left(
\frac{1}{r}\right)  \right)  ^{\frac{N-2}{N}}\text{ for }x\in B_{r}\\
\left\Vert \Delta u_{r}\right\Vert _{\frac{N}{2}}^{\frac{N}{2}} &  \leq
\omega_{N-1}a\left(  N,2\right)  ^{\frac{N}{2}}A_{r}\text{ and }\\
\left\Vert \Delta u_{r}\right\Vert _{\frac{N}{2}}^{\frac{N}{N-2}} &  \leq
\frac{\beta\left(  N,2\right)  }{N}A_{r}^{\frac{2}{N-2}}.
\end{align*}
Now,
\begin{align*}
ATA\left(  \beta\left(  N,2\right)  ,\beta\right)   &  \geq\lim_{r\rightarrow
0^{+}}\frac{1}{\left\Vert \frac{u_{r}}{\left\Vert \Delta u_{r}\right\Vert
_{\frac{N}{2}}}\right\Vert _{\frac{N}{2}}^{\frac{N}{2}\left(  1-\frac{\beta
}{N}\right)  }}%
{\displaystyle\int\limits_{B_{r}}}
\phi_{N,2}\left(  \beta\left(  N,2\right)  (1-\frac{\beta}{N})\left\vert
\frac{u_{r}}{\left\Vert \Delta u_{r}\right\Vert _{\frac{N}{2}}}\right\vert
^{N/(N-2)}\right)  \frac{dx}{\left\vert x\right\vert ^{\beta}}\\
&  \geq\lim_{r\rightarrow0^{+}}\frac{\left\Vert \Delta u_{r}\right\Vert
_{\frac{N}{2}}^{\frac{N}{2}\left(  1-\frac{\beta}{N}\right)  }}{\left\Vert
u_{r}\right\Vert _{\frac{N}{2}}^{\frac{N}{2}\left(  1-\frac{\beta}{N}\right)
}}%
{\displaystyle\int\limits_{B_{r}}}
\phi_{N,2}\left(  \frac{\beta\left(  N,2\right)  (1-\frac{\beta}{N}%
)\log\left(  \frac{1}{r}\right)  }{\left\Vert \Delta u_{r}\right\Vert
_{\frac{N}{2}}^{\frac{N}{N-2}}}\right)  \frac{dx}{\left\vert x\right\vert
^{\beta}}\\
&  \geq\lim_{r\rightarrow0^{+}}\frac{\left\Vert \Delta u_{r}\right\Vert
_{\frac{N}{2}}^{\frac{N}{2}\left(  1-\frac{\beta}{N}\right)  }}{\left\Vert
u_{r}\right\Vert _{\frac{N}{2}}^{\frac{N}{2}\left(  1-\frac{\beta}{N}\right)
}}\omega_{N-1}\frac{r^{N-\beta}}{N-\beta}\phi_{N,2}\left(  \frac{\left(
N-\beta\right)  \log\left(  \frac{1}{r}\right)  }{\left[  1+2\varepsilon
\left(  \left\Vert \psi^{\prime}\right\Vert _{\infty}+O\left(  \frac{1}%
{\log\left(  \frac{1}{r}\right)  }\right)  \right)  ^{\frac{N}{2}}\right]
^{\frac{2}{N-2}}}\right)  \\
&  \rightarrow\infty\text{ as }r\rightarrow0^{+}\text{.}%
\end{align*}

Now, consider the following sequence%
\[
u_{k}\left(  x\right)  =\left\{
\begin{array}
[c]{c}%
\left[  \frac{1}{\beta\left(  N,2\right)  }\ln k\right]  ^{1-\frac{2}{N}%
}-\frac{\left\vert x\right\vert ^{2}}{\left(  \frac{\ln k}{k}\right)
^{\frac{2}{N}}}+\frac{1}{\left(  \ln k\right)  ^{\frac{2}{N}}}\text{ if }%
0\leq\left\vert x\right\vert \leq\left(  \frac{1}{k}\right)  ^{\frac{1}{N}}\\
N\beta\left(  N,2\right)  ^{\frac{2}{N}-1}\left(  \ln k\right)  ^{-\frac{2}%
{N}}\ln\frac{1}{\left\vert x\right\vert }\text{
\ \ \ \ \ \ \ \ \ \ \ \ \ \ \ \ \ if }\left(  \frac{1}{k}\right)  ^{\frac
{1}{N}}\leq\left\vert x\right\vert \leq1.\\
0\text{
\ \ \ \ \ \ \ \ \ \ \ \ \ \ \ \ \ \ \ \ \ \ \ \ \ \ \ \ \ \ \ \ \ \ \ \ \ \ \ \ if
}\left\vert x\right\vert >1.
\end{array}
\right.
\]
Then, we can check that
\[
1\leq\left\Vert \Delta u_{k}\right\Vert _{\frac{N}{2}}^{\frac{N}{2}}%
\leq1+O\left(  \frac{1}{\ln k}\right)  .
\]
Also,
\begin{align*}
\left\Vert u_{k}\right\Vert _{\frac{N}{2}}^{\frac{N}{2}}  &  \leq\omega
_{N-1}\left(  N\beta\left(  N,2\right)  ^{\frac{2}{N}-1}\left(  \ln k\right)
^{-\frac{2}{N}}\right)  ^{\frac{N}{2}}%
{\displaystyle\int\limits_{0}^{1}}
r^{N-1}\ln\frac{1}{r}dr\\
&  +\frac{\omega_{N-1}}{N}\left(  \left[  \frac{1}{\beta\left(  N,2\right)
}\ln k\right]  ^{1-\frac{2}{N}}+\frac{1}{\left(  \frac{\ln k}{k}\right)
^{\frac{2}{N}}}\right)  ^{\frac{N}{2}}\frac{1}{k}\\
&  \leq A\left(  \ln k\right)  ^{-1}+B\left(  \ln k\right)  ^{\frac{N-2}{2}%
}\frac{1}{k}%
\end{align*}
for some constants $A,B>0$.

Let
\[
v_{k}=\frac{u_{k}}{\left\Vert \Delta u_{k}\right\Vert _{\frac{N}{2}}}%
\]
then
\[
\left\Vert \Delta v_{k}\right\Vert _{\frac{N}{2}}=1
\]
and
\[
\left\Vert v_{k}\right\Vert _{\frac{N}{2}}^{\frac{N}{2}}\leq\left\Vert
u_{k}\right\Vert _{\frac{N}{2}}^{\frac{N}{2}}\leq A\left(  \ln k\right)
^{-1}+B\left(  \ln k\right)  ^{\frac{N-2}{2}}\frac{1}{k}.
\]
By the definition of $ATA\left(  \alpha,\beta\right)  ,$ we get%
\begin{align*}
ATA\left(  \alpha,\beta\right)   &  \geq\frac{1}{\left\Vert v_{k}\right\Vert
_{\frac{N}{2}}^{\frac{N}{2}\left(  1-\frac{\beta}{N}\right)  }}%
{\displaystyle\int\limits_{\mathbb{R}^{N}}}
\phi_{N,2}\left(  \alpha\left(  1-\frac{\beta}{N}\right)  \left\vert
v_{k}\right\vert ^{\frac{N}{N-2}}\right)  \frac{dx}{\left\vert x\right\vert
^{\beta}}\\
&  \geq\frac{1}{\left\Vert v_{k}\right\Vert _{\frac{N}{2}}^{\frac{N}{2}\left(
1-\frac{\beta}{N}\right)  }}%
{\displaystyle\int\limits_{\left\vert x\right\vert \leq\left(  \frac{1}%
{k}\right)  ^{\frac{1}{N}}}}
\phi_{N,2}\left(  \alpha\left(  1-\frac{\beta}{N}\right)  \left\vert
v_{k}\right\vert ^{\frac{N}{N-2}}\right)  \frac{dx}{\left\vert x\right\vert
^{\beta}}\\
&  \geq C\frac{\exp\left(  \frac{\alpha}{\beta\left(  N,2\right)  }\left(
1-{\frac{\beta}{N}}\right)  \left(  \frac{1}{\left\Vert \Delta u_{k}%
\right\Vert _{\frac{N}{2}}^{\frac{2}{N-2}}}-{\frac{\beta(N,2)}{\alpha}%
}\right)  \ln k\right)  }{\left(  A\left(  \ln k\right)  ^{-1}+B\left(  \ln
k\right)  ^{\frac{N-2}{2}}\frac{1}{k}\right)  ^{1-\frac{\beta}{N}}}%
\end{align*}
Note that when $k$ (independent of $\alpha$) is large
\[
\frac{1}{\left\Vert \Delta u_{k}\right\Vert _{\frac{N}{2}}^{\frac{2}{N-2}}%
}-{\frac{\beta(N,2)}{\alpha}}\approx1-{{\frac{\beta(N,2)}{\alpha}}}.
\]
So we have
\[
ATA(\alpha,\beta)\gtrsim\exp{\left\{  \left(  1-{\frac{\beta}{N}}\right)
\left(  {\frac{\alpha}{\beta(N,2)}}-1\right)  \ln k\right\}  }\cdot\left(  \ln
k\right)  ^{1-{\frac{\beta}{N}}}%
\]
When $\alpha$ is close enough to $\beta(N,2)$, we are able to choose $k$ large
enough as required before such that
\[
\ln k\approx\frac{1}{1-{\frac{\alpha}{\beta(N,2)}}}%
\]
or
\[
\left(  1-{\frac{\beta}{N}}\right)  \left(  {\frac{\alpha}{\beta(N,2)}%
}-1\right)  \ln k\approx1.
\]
Then
\[
ATA(\alpha,\beta)\gtrsim C\cdot\left(  \frac{1}{1-{\frac{\alpha}{\beta(N,2)}}%
}\right)  ^{1-{\frac{\beta}{N}}}\approx\left(  \frac{1}{1-\left(
{\frac{\alpha}{\beta(N,2)}}\right)  ^{\frac{N-2}{2}}}\right)  ^{1-{\frac
{\beta}{N}}}%
\]
when $\alpha$ is close enough to $\beta(N,2)$.
\end{proof}

We now offer another proof to Theorem \ref{A} using the improved sharp
subcritical Adams inequality (\ref{1.3.3}). \ 

\begin{proof}
[Proof of Theorem \ref{A}]Assume $0<b\leq\frac{N}{2}$. Let $u\in W^{2,\frac
{N}{2}}\left(
\mathbb{R}
^{N}\right)  \setminus\left\{  0\right\}  :\left\Vert \Delta u\right\Vert
_{\frac{N}{2}}^{a}+\left\Vert u\right\Vert _{\frac{N}{2}}^{b}\leq1.$ Assume
that%
\[
\left\Vert \Delta u\right\Vert _{\frac{N}{2}}=\theta\in\left(  0,1\right)
;~\left\Vert u\right\Vert _{\frac{N}{2}}^{b}\leq1-\theta^{a}.
\]
If $\frac{1}{4}<\theta<1$, then we set%
\begin{align*}
v\left(  x\right)   &  =\frac{u\left(  \lambda x\right)  }{\theta}\\
\lambda &  =\frac{\left(  1-\theta^{a}\right)  ^{\frac{1}{2b}}}{\theta
^{\frac{1}{2}}}>0.
\end{align*}
Hence%
\begin{align*}
\left\Vert \Delta v\right\Vert _{\frac{N}{2}}  &  =\frac{\left\Vert \Delta
u\right\Vert _{\frac{N}{2}}}{\theta}=1;\\
\left\Vert v\right\Vert _{\frac{N}{2}}^{\frac{N}{2}}  &  =\int_{%
\mathbb{R}
^{N}}\left\vert v\right\vert ^{\frac{N}{2}}dx=\frac{1}{\theta^{\frac{N}{2}}%
}\int_{%
\mathbb{R}
^{N}}\left\vert u\left(  \lambda x\right)  \right\vert ^{\frac{N}{2}}%
dx=\frac{1}{\theta^{\frac{N}{2}}\lambda^{N}}\left\Vert u\right\Vert _{\frac
{N}{2}}^{\frac{N}{2}}\leq\frac{\left(  1-\theta^{a}\right)  ^{\frac{N}{2b}}%
}{\theta^{\frac{N}{2}}\lambda^{N}}=1.
\end{align*}
By Theorem \ref{ATA}, we get%
\begin{align*}
&  \int_{%
\mathbb{R}
^{N}}\frac{\phi_{N,2}\left(  \beta\left(  N,2\right)  \left(  1-\frac{\beta
}{N}\right)  \left\vert u\right\vert ^{\frac{N}{N-2}}\right)  }{\left\vert
x\right\vert ^{\beta}}dx=\int_{%
\mathbb{R}
^{N}}\frac{\phi_{N,2}\left(  \beta\left(  N,2\right)  \left(  1-\frac{\beta
}{N}\right)  \left\vert u\left(  \lambda x\right)  \right\vert ^{\frac{N}%
{N-2}}\right)  }{\left\vert \lambda x\right\vert ^{\beta}}d\left(  \lambda
x\right) \\
&  \leq\lambda^{N-\beta}\int_{%
\mathbb{R}
^{N}}\frac{\phi_{N,2}\left(  \theta^{\frac{N}{N-2}}\beta\left(  N,2\right)
(1-\frac{\beta}{N})\left\vert v\right\vert ^{N/(N-2)}\right)  }{\left\vert
x\right\vert ^{\beta}}dx\\
&  \leq\lambda^{N-\beta}ATA\left(  \theta^{\frac{N}{N-2}}\beta\left(
N,2\right)  ,\beta\right)  \leq\left(  \frac{\left(  1-\theta^{a}\right)
^{\frac{1}{2b}}}{\theta^{\frac{1}{2}}}\right)  ^{N-\beta}\frac{C\left(
N,\beta\right)  }{\left[  1-\left(  \frac{\theta^{\frac{N}{N-2}}\beta\left(
N,2\right)  }{\beta\left(  N,2\right)  }\right)  ^{\frac{N-2}{2}}\right]
^{1-\frac{\beta}{N}}}\\
&  \leq\frac{\left(  \left(  1-\theta^{a}\right)  ^{\frac{N}{2b}}\right)
^{1-\frac{\beta}{N}}}{\left(  1-\theta^{\frac{N}{2}}\right)  ^{1-\frac{\beta
}{N}}}C\left(  N,\beta\right)  \leq C\left(  N,\beta,a,b\right)  \text{ since
}b\leq\frac{N}{2}\text{.}%
\end{align*}
If $0<\theta\leq\frac{1}{4}$, then with
\[
v\left(  x\right)  =2^{2}u\left(  2x\right)  ,
\]
we have%
\begin{align*}
\left\Vert \Delta v\right\Vert _{\frac{N}{2}}  &  =4\left\Vert \Delta
u\right\Vert _{\frac{N}{2}}\leq1\\
\left\Vert v\right\Vert _{\frac{N}{2}}  &  \leq1.
\end{align*}
By Theorem \ref{ATA}:%
\begin{align*}
&  \int_{%
\mathbb{R}
^{N}}\frac{\phi_{N,2}\left(  \beta\left(  N,2\right)  \left(  1-\frac{\beta
}{N}\right)  \left\vert u\right\vert ^{\frac{N}{N-2}}\right)  }{\left\vert
x\right\vert ^{\beta}}dx\leq4^{N}\int_{%
\mathbb{R}
^{N}}\frac{\phi_{N}\left(  \frac{\beta\left(  N,2\right)  (1-\frac{\beta}{N}%
)}{4^{\frac{N}{N-2}}}\left\vert v\right\vert ^{N/(N-2)}\right)  }{\left\vert
x\right\vert ^{\beta}}dx\\
&  \leq C\left(  N,\beta\right)  .
\end{align*}

We now also consider the Adams' test functions as in the proof of Theorem
\ref{ATA}. Let $\beta>\beta\left(  N,2\right)  $. Set
\begin{align*}
~w_{r}(\left\vert x\right\vert )  &  =\lambda_{r}\frac{u_{r}\left(  \left\vert
x\right\vert \right)  }{\left\Vert \Delta u_{r}\right\Vert _{\frac{N}{2}}%
}\text{ where }\lambda_{r}\in\left(  0,1\right)  \text{ is a solution of
}\lambda_{r}^{a}+\frac{\lambda_{r}^{b}\left\Vert u_{r}\right\Vert _{\frac
{N}{2}}^{b}}{\left\Vert \Delta u_{r}\right\Vert _{\frac{N}{2}}^{b}}=1.\\
\lambda_{r}  &  \rightarrow_{r\rightarrow0^{+}}1.
\end{align*}
Then
\[
\left\Vert \Delta w_{r}\right\Vert _{\frac{N}{2}}^{a}+\left\Vert
w_{r}\right\Vert _{\frac{N}{2}}^{b}=1
\]
and
\begin{align*}
&  \lim_{r\rightarrow0^{+}}\int_{%
\mathbb{R}
^{N}}\frac{\phi_{N,2}\left(  \beta\left(  1-\frac{\beta}{N}\right)  \left\vert
w_{r}\right\vert ^{\frac{N}{N-2}}\right)  }{\left\vert x\right\vert ^{\beta}%
}dx\\
\geq &  \lim_{r\rightarrow0^{+}}%
{\displaystyle\int\limits_{B_{r}}}
\phi_{N,2}\left(  \frac{\beta\left(  1-\frac{\beta}{N}\right)  \lambda
_{r}^{\frac{N}{N-2}}\left\vert u_{r}\right\vert ^{\frac{N}{N-2}}}{\left\Vert
\Delta u_{r}\right\Vert _{\frac{N}{2}}^{\frac{N}{N-2}}}\right)  \frac
{dx}{\left\vert x\right\vert ^{\beta}}\\
\geq &  \lim_{r\rightarrow0^{+}}\omega_{N-1}\frac{r^{N-\beta}}{N-\beta}%
\phi_{N,2}\left(  \frac{\beta}{\beta\left(  N,2\right)  }\frac{\left(
N-\beta\right)  \log\left(  \frac{1}{r}\right)  }{\left[  1+2\varepsilon
\left(  \left\Vert \psi^{\prime}\right\Vert _{\infty}+O\left(  \frac{1}%
{\log\left(  \frac{1}{r}\right)  }\right)  \right)  ^{\frac{N}{2}}\right]
^{\frac{2}{N-2}}}\right) \\
&  \rightarrow\infty\text{ as }r\rightarrow0^{+}\text{ if we choose
}\varepsilon\text{ small enough.\ }%
\end{align*}

It now remains to show that%
\[
A_{a,b}\left(  \beta\right)  =\sup_{\alpha\in\left(  0,\beta\left(
N,2\right)  \right)  }\left(  \frac{1-\left(  \frac{\alpha}{\beta\left(
N,2\right)  }\right)  ^{\frac{N-2}{N}a}}{\left(  \frac{\alpha}{\beta\left(
N,2\right)  }\right)  ^{\frac{N-2}{N}b}}\right)  ^{\frac{N-\beta}{2b}%
}ATA\left(  \alpha,\beta\right)  .
\]
By (\ref{2.2}):%
\[
\sup_{\alpha\in\left(  0,\beta\left(  N,2\right)  \right)  }\left(
\frac{1-\left(  \frac{\alpha}{\beta\left(  N,2\right)  }\right)  ^{\frac
{N-2}{N}a}}{\left(  \frac{\alpha}{\beta\left(  N,2\right)  }\right)
^{\frac{N-2}{N}b}}\right)  ^{\frac{N-\beta}{2b}}ATA\left(  \alpha
,\beta\right)  \leq A_{a,b}\left(  \beta\right)  .
\]
Now, let $\left(  u_{n}\right)  $ be the maximizing sequence of $A_{a,b}%
\left(  \beta\right)  $, i.e., $u_{n}\in W^{2,\frac{N}{2}}\left(
\mathbb{R}
^{N}\right)  \setminus\left\{  0\right\}  :\left\Vert \Delta u_{n}\right\Vert
_{\frac{N}{2}}^{a}+\left\Vert u_{n}\right\Vert _{\frac{N}{2}}^{b}\leq1$ and
\[
\int_{%
\mathbb{R}
^{N}}\phi_{N,2}\left(  \beta\left(  N,2\right)  \left(  1-\frac{\beta}%
{N}\right)  \left\vert u_{n}\right\vert ^{\frac{N}{N-2}}\right)  \frac
{dx}{\left\vert x\right\vert ^{\beta}}\rightarrow_{n\rightarrow\infty}%
A_{a,b}\left(  \beta\right)  .
\]
We define a new sequence:%
\begin{align*}
v_{n}\left(  x\right)   &  =\frac{u\left(  \lambda_{n}x\right)  }{\left\Vert
\Delta u_{n}\right\Vert _{\frac{N}{2}}}\\
\lambda_{n}  &  =\left(  \frac{1-\left\Vert \Delta u_{n}\right\Vert _{\frac
{N}{2}}^{a}}{\left\Vert \Delta u_{n}\right\Vert _{\frac{N}{2}}^{b}}\right)
^{\frac{1}{2b}}>0.
\end{align*}
Hence%
\[
\left\Vert \Delta v_{n}\right\Vert _{\frac{N}{2}}=1\text{ and }\left\Vert
v_{n}\right\Vert _{\frac{N}{2}}\leq1.
\]
Also,
\begin{align*}
&  \int_{%
\mathbb{R}
^{N}}\phi_{N,2}\left(  \beta\left(  N,2\right)  \left(  1-\frac{\beta}%
{N}\right)  \left\vert u_{n}\right\vert ^{\frac{N}{N-2}}\right)  \frac
{dx}{\left\vert x\right\vert ^{\beta}}\\
&  =\lambda_{n}^{N-\beta}\int_{%
\mathbb{R}
^{N}}\frac{\phi_{N,2}\left(  \left\Vert \Delta u_{n}\right\Vert _{\frac{N}{2}%
}^{N/(N-2)}\beta\left(  N,2\right)  (1-\frac{\beta}{N})\left\vert
v_{n}\right\vert ^{N/(N-2)}\right)  }{\left\vert x\right\vert ^{\beta}}dx\\
&  \leq\lambda_{n}^{N-\beta}ATA\left(  \left\Vert \Delta u_{n}\right\Vert
_{\frac{N}{2}}^{N/(N-2)}\beta\left(  N,2\right)  ,\beta\right)  \leq
\sup_{\alpha\in\left(  0,\beta\left(  N,2\right)  \right)  }\left(
\frac{1-\left(  \frac{\alpha}{\beta\left(  N,2\right)  }\right)  ^{\frac
{N-2}{N}a}}{\left(  \frac{\alpha}{\beta\left(  N,2\right)  }\right)
^{\frac{N-2}{N}b}}\right)  ^{\frac{N-\beta}{2b}}ATA\left(  \alpha
,\beta\right)  .
\end{align*}
Now, we assume that there is some $b>\frac{N}{2}$ such that $A_{a,b}\left(
\beta\right)  <\infty$. Then
\[
A_{a,b}\left(  \beta\right)  =\sup_{\alpha\in\left(  0,\beta\left(
N,2\right)  \right)  }\left(  \frac{1-\left(  \frac{\alpha}{\beta\left(
N,2\right)  }\right)  ^{\frac{N-2}{N}a}}{\left(  \frac{\alpha}{\beta\left(
N,2\right)  }\right)  ^{\frac{N-2}{N}b}}\right)  ^{\frac{N-\beta}{2b}%
}ATA\left(  \alpha,\beta\right)
\]
and so%
\[
\overline{\lim}_{\alpha\uparrow\beta\left(  N,2\right)  }\left(
\frac{1-\left(  \frac{\alpha}{\beta\left(  N,2\right)  }\right)  ^{\frac
{N-2}{N}a}}{\left(  \frac{\alpha}{\beta\left(  N,2\right)  }\right)
^{\frac{N-2}{N}b}}\right)  ^{\frac{N-\beta}{2b}}ATA\left(  \alpha
,\beta\right)  <\infty.
\]
Also, by Theorem \ref{ATA}:
\begin{equation}
\overline{\lim}_{\alpha\uparrow\beta\left(  N,2\right)  }\left(
\frac{1-\left(  \frac{\alpha}{\beta\left(  N,2\right)  }\right)  ^{\frac
{N-2}{N}a}}{\left(  \frac{\alpha}{\beta\left(  N,2\right)  }\right)
^{\frac{N-2}{N}b}}\right)  ^{\frac{N-\beta}{N}}ATA\left(  \alpha,\beta\right)
>0, \label{4.1.1}%
\end{equation}
Hence:%
\[
\underline{\lim}_{\alpha\uparrow\beta\left(  N,2\right)  }\frac{\left(
1-\left(  \frac{\alpha}{\beta\left(  N,2\right)  }\right)  ^{\frac{N-2}{N}%
a}\right)  ^{\frac{N-\beta}{2b}}}{\left(  1-\left(  \frac{\alpha}{\beta\left(
N,2\right)  }\right)  ^{\frac{N-2}{N}a}\right)  ^{\frac{N-\beta}{N}}}>0
\]
which is impossible since $b>\frac{N}{2}$. The proof is now completed.
\end{proof}

\subsection{Adams inequalities on $W^{\gamma,\frac{N}{\gamma}}\left(
\mathbb{R}
^{N}\right)  $-Proof of Theorem \ref{generalA}}

Let $u\in W^{\gamma,p}\left(
\mathbb{R}
^{N}\right)  \setminus\left\{  0\right\}  :\left\Vert \left(  -\Delta\right)
^{\frac{\gamma}{2}}u\right\Vert _{p}^{a}+\left\Vert u\right\Vert _{p}^{b}%
\leq1.$ We set%
\[
\left\Vert \left(  -\Delta\right)  ^{\frac{\gamma}{2}}u\right\Vert _{p}%
=\theta\in\left(  0,1\right)  ;~\left\Vert u\right\Vert _{p}^{b}\leq
1-\theta^{a}.
\]
If $\frac{1}{2^{\gamma}}<\theta<1$, then by define a new function%
\begin{align*}
v\left(  x\right)   &  =\frac{u\left(  \lambda x\right)  }{\theta}\\
\lambda &  =\frac{\left(  1-\theta^{a}\right)  ^{\frac{1}{\gamma b}}}%
{\theta^{\frac{1}{\gamma}}}>0.
\end{align*}
we get%
\[
\left(  -\Delta\right)  ^{\frac{\gamma}{2}}v\left(  x\right)  =\frac
{\lambda^{\gamma}}{\theta}\left(  \left(  -\Delta\right)  ^{\frac{\gamma}{2}%
}u\right)  \left(  \lambda x\right)  .
\]
Hence
\begin{align*}
\left\Vert \left(  -\Delta\right)  ^{\frac{\gamma}{2}}v\right\Vert _{p} &
=\frac{\left\Vert \left(  -\Delta\right)  ^{\frac{\gamma}{2}}u\right\Vert
_{p}}{\theta}=1;\\
\left\Vert v\right\Vert _{p}^{p} &  =\int_{%
\mathbb{R}
^{N}}\left\vert v\right\vert ^{p}dx=\frac{1}{\theta^{p}}\int_{%
\mathbb{R}
^{N}}\left\vert u\left(  \lambda x\right)  \right\vert ^{p}dx=\frac{1}%
{\theta^{p}\lambda^{N}}\left\Vert u\right\Vert _{p}^{p}\leq\frac{\left(
1-\theta^{a}\right)  ^{\frac{p}{b}}}{\theta^{p}\lambda^{N}}=1.
\end{align*}
By the definition of $GATA\left(  \alpha,\beta\right)  $, we get%
\begin{align*}
&  \int_{%
\mathbb{R}
^{N}}\frac{\phi_{N,\gamma}\left(  \beta_{0}\left(  N,\gamma\right)  \left(
1-\frac{\beta}{N}\right)  \left\vert u\right\vert ^{\frac{p}{p-1}}\right)
}{\left\vert x\right\vert ^{\beta}}dx=\int_{%
\mathbb{R}
^{N}}\frac{\phi_{N,\gamma}\left(  \beta_{0}\left(  N,\gamma\right)  \left(
1-\frac{\beta}{N}\right)  \left\vert u\left(  \lambda x\right)  \right\vert
^{\frac{p}{p-1}}\right)  }{\left\vert \lambda x\right\vert ^{\beta}}d\left(
\lambda x\right)  \\
&  \leq\lambda^{N-\beta}\int_{%
\mathbb{R}
^{N}}\frac{\phi_{N,\gamma}\left(  \theta^{\frac{p}{p-1}}\beta_{0}\left(
N,\gamma\right)  \left(  1-\frac{\beta}{N}\right)  \left\vert v\right\vert
^{\frac{p}{p-1}}\right)  }{\left\vert x\right\vert ^{\beta}}dx\\
&  \leq\lambda^{N-\beta}GATA\left(  \theta^{\frac{p}{p-1}}\beta_{0}\left(
N,\gamma\right)  ,\beta\right)  \leq\left(  \frac{\left(  1-\theta^{a}\right)
^{\frac{1}{\gamma b}}}{\theta^{\frac{1}{\gamma}}}\right)  ^{N-\beta}%
\frac{C\left(  N,\beta\right)  }{\left[  1-\left(  \frac{\theta^{\frac{p}%
{p-1}}\beta_{0}\left(  N,\gamma\right)  }{\beta_{0}\left(  N,\gamma\right)
}\right)  ^{\frac{p-1}{p}}\right]  ^{1-\frac{\beta}{N}}}\\
&  \leq\frac{\left(  \left(  1-\theta^{a}\right)  ^{\frac{N}{\gamma b}%
}\right)  ^{1-\frac{\beta}{N}}}{\left(  1-\theta\right)  ^{1-\frac{\beta}{N}}%
}C\left(  N,\beta\right)  \leq C\left(  N,\beta,a,b\right)  \text{ since
}b\leq p\text{.}%
\end{align*}
If $0<\theta\leq\frac{1}{2^{\gamma}}$, then with
\[
v\left(  x\right)  =2^{\gamma}u\left(  2x\right)  ,
\]
we have%
\begin{align*}
\left\Vert \left(  -\Delta\right)  ^{\frac{\gamma}{2}}v\right\Vert _{p} &
=2^{\gamma}\left\Vert \left(  -\Delta\right)  ^{\frac{\gamma}{2}}u\right\Vert
_{p}\leq1\\
\left\Vert v\right\Vert _{p} &  \leq1.
\end{align*}
By the definition of $GATA\left(  \alpha,\beta\right)  :$%
\begin{align*}
&  \int_{%
\mathbb{R}
^{N}}\frac{\phi_{N,\gamma}\left(  \beta_{0}\left(  N,\gamma\right)  \left(
1-\frac{\beta}{N}\right)  \left\vert u\right\vert ^{\frac{p}{p-1}}\right)
}{\left\vert x\right\vert ^{\beta}}dx\leq2^{N}\int_{%
\mathbb{R}
^{N}}\frac{\phi_{N,\gamma}\left(  \frac{\beta_{0}\left(  N,\gamma\right)
}{2^{\gamma\frac{p}{p-1}}}\left(  1-\frac{\beta}{N}\right)  \left\vert
v\right\vert ^{\frac{p}{p-1}}\right)  }{\left\vert x\right\vert ^{\beta}}dx\\
&  \leq C\left(  N,\beta\right)  .
\end{align*}

\textbf{Acknowledgement} The results of this paper have been presented in
invited talks by the second author at Zhejiang University in China in June,
2014 and at the Workshop on Partial Differential Equation and its Applications
at IMS in Singapore in December 2014 and by the first author at the AMS
special session on Geometric inequalities and Nonlinear Partial Differential
Equations in Las Vegas in April 2015. The authors wish to thank the organizers
for invitations.

\end{document}